\newtheorem{theorem}{Theorem}[section]
\newtheorem{lemma}{Lemma}[section]
\renewcommand\theequation%
\newtheorem{remark}{Remark}[section]
\def\R{\mathbb{R}}
\def\N{\mathbb{N}}
\def\Z{\mathbb{Z}}
\let\eps\varepsilon
\let\epsilon\varepsilon
\let\oldsum\sum
\renewcommand{\sum}{\displaystyle\oldsum}
\let\oldprod\prod
\renewcommand{\prod}{\displaystyle\oldprod}
\let\oldinf\inf
\renewcommand{\inf}{\displaystyle\oldinf}
\let\oldsup\sup
\renewcommand{\sup}{\displaystyle\oldsup}
\let\leq\leqslant
\let\geq\geqslant
\newlength{\oldparindent}
\newcommand{\myindent}{\hspace{\oldparindent}}
\title{Weakly turbulent solution to the Schrödinger equation on the two-dimensional torus with real potential decaying to zero at infinity}
\author{Ambre Chabert}
\date{}
\begin{document}
\maketitle

\begin{abstract}
    We build a smooth time-dependent real potential on the two-dimensional torus, decaying as time tends to infinity in Sobolev norms along with all its time derivatives, and we exhibit a smooth solution to the associated Schrödinger equation on the two-dimensional torus whose $H^s$ norms nevertheless grow logarithmically as time tends to infinity. We use Fourier decomposition in order to exhibit a discrete resonant system of interactions, which we are further able to reduce to a sequence of finite-dimensional linear systems along which the energy propagates to higher and higher frequencies. The constructions are very explicit and we can thus obtain lower bounds on the growth rate of the solution. 
\end{abstract}

\section{Introduction}

\subsection{Main result}

\myindent In this paper, we build an explicit $\mathcal{C}^{\infty}$ solution to the Schrödinger equation on the two-dimensional torus $\mathbb{T}^2 := \R^2/(2\pi\Z)^2$
\begin{equation}\label{main}
    i\partial_t u(t,x) = -\Delta u (t,x) + V(t,x)u (t,x) \quad (t,x) \in [0,+\infty) \times \mathbb{T}^2
\end{equation}
where the potential $V(t,x)$ is real, smooth on the interval $[0, + \infty)\times \mathbb{T}^2$, and decaying at infinity in Sobolev norms.

\myindent With a carefully chosen $V$, we are able to exhibit \textit{weakly turbulent} behaviour, that is we are able to prove the following theorem

\begin{theorem}\label{result} There exist a real smooth potential $V(t,x)$, and a smooth function $u(t,x)$, $(t,x)\in [0,+\infty)\times \mathbb{T}^2$ such that 
    \begin{equation}
        i\partial_t u(t,x) = -\Delta u(t,x) + V(t,x)u(t,x)
    \end{equation}
    \myindent Furthermore, given any small constant $\delta > 0$, and any order $s > 0$, there exists  $c_{\delta,s} > 0$ such that as $t\to \infty$
    \begin{equation}\label{grate}
        \|u(t)\|_{H^s} \geq c_{\delta,s} (\log t)^{s(1 - \delta)}.
    \end{equation}
    \myindent Finally, the potential $V$ satisfies the bound
    \begin{equation}\label{decay}
        \forall k \in \N, \ \forall s\geq 0 \quad \lim_{t\to \infty} \|\partial_t^kV(t,\cdot)\|_{H^s} = 0
    \end{equation}
\end{theorem}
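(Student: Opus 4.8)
\emph{Sketch of proof.} The plan is to pass to the Fourier side, reduce the equation to an infinite system of linear ODEs, and design $V$ so that, over a sequence of very long time windows, this system decouples into elementary resonant $2\times 2$ blocks, each of which carries the whole $L^2$-mass from one Fourier mode onto a strictly higher one. Expanding $u(t,x)=\sum_{n\in\Z^2}a_n(t)\,e^{in\cdot x}$ and setting $a_n(t)=b_n(t)\,e^{-i|n|^2 t}$, the equation becomes
\[
i\,\dot b_n(t)=\sum_{m\in\Z^2}\widehat V(t,n-m)\,e^{i(|n|^2-|m|^2)t}\,b_m(t),
\]
with $\sum_n|b_n(t)|^2$ conserved since $V$ is real. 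I will take $V(t,x)=\sum_{j\geq1}\varepsilon_j\,\chi_j(t)\cos\!\big(k_j\cdot x-\omega_j t\big)$, where $\chi_j$ is a fixed smooth bump rescaled to a window $I_j=[t_j,t_{j+1}]$ (the $I_j$ consecutive and disjoint), $\varepsilon_j\searrow 0$, $(m_j)_{j\geq0}\subset\Z^2$ is a sequence with $|m_j|\to\infty$ to be chosen, $k_j:=m_j-m_{j-1}$ and $\omega_j:=|m_j|^2-|m_{j-1}|^2$. The time frequency $\omega_j$ creates \emph{beating}: over $I_j$ the coefficient coupling $b_{m_{j-1}}$ to $b_{m_j}$ is non-oscillatory, equal to $\tfrac12\varepsilon_j\chi_j(t)$, while every other coefficient retains a factor $e^{i\phi t}$ with $\phi\in\Z\setminus\{0\}$. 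Restricting the system over $I_j$ to $\mathrm{span}\{e^{im_{j-1}\cdot x},e^{im_j\cdot x}\}$ then leaves, to leading order, the Hermitian $2\times2$ system with off-diagonal entry $\tfrac12\varepsilon_j\chi_j(t)$, whose flow is the rotation through the angle $\Theta_j(t)=\tfrac12\int_{t_j}^t\varepsilon_j\chi_j$; normalising so that $\int_{I_j}\varepsilon_j\chi_j=\pi$ — hence $|I_j|=:T_j\asymp\varepsilon_j^{-1}$ — the flow over $I_j$ is a full swap, moving all the mass at $m_{j-1}$ onto $m_j$. Starting from $u(0)=e^{im_0\cdot x}$, after stage $j$ the mass sits at $m_j$, so $\|u(t)\|_{H^s}\gtrsim|m_{j-1}|^s$ throughout $I_j$. (The resonant part of the coupling produced by $\cos(k_j\cdot x-\omega_j t)$ links only the pairs $(p,p+k_j)$ with $(p-m_{j-1})\cdot k_j=0$, a decoupled family of blocks among which solely the one with $p=m_{j-1}$ carries mass; finitely many non-degeneracy inequalities on the directions of the $k_i$ keep the small mass leaked by the earlier stages off all these lines.)

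The main obstacle is the error estimate, because $I_j$ has the enormous length $T_j\asymp\varepsilon_j^{-1}$, so the crude bound ``the non-resonant coupling has size $\varepsilon_j$, integrated over $T_j$'' only gives an $O(1)$ error. I would control it by writing $b=b^{\mathrm{res}}+c$, with $b^{\mathrm{res}}$ the $2\times2$ rotation above extended by the identity, applying Duhamel against the resonant propagator $S^{\mathrm{res}}$, expanding the non-resonant coupling as $\tfrac12\varepsilon_j\chi_j(t)\sum_{\nu\in\Z\setminus\{0\}}e^{i\nu t}\widehat W_\nu$ with $\|\widehat W_\nu\|\leq1$, and integrating by parts repeatedly in $t$ inside each $\nu$-term. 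All boundary terms vanish because $\chi_j$ is flat at $\partial I_j$, and $K$ integrations by parts trade a gain $|\nu|^{-K}$ against $T_j$ times the sup-norm of the $K$-th derivative of the slow factor $\varepsilon_j\chi_j(t)\,S^{\mathrm{res}}(\cdot,t)\,\widehat W_\nu\,U(t,\cdot)$, where $U$ is the full propagator over $I_j$. The delicate point is that, although $\dot W$ is large (it carries $\omega_j\asymp|m_j|^2$), only $W$ itself — of operator norm $\lesssim\varepsilon_j$ — enters $\partial_t U$, so $\|\partial_t^\ell U\|\lesssim_\ell\varepsilon_j|m_j|^{2(\ell-1)}$ on the frequency range actually occupied (the tails, reachable only through many $\pm k_j$-shifts, stay super-polynomially small and can be truncated off). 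Hence after $K$ integrations by parts the per-stage error is $\eta_j\lesssim_K\varepsilon_j|m_j|^{2(K-1)}$; with $\varepsilon_j=|m_j|^{-a_j}$ this equals $|m_j|^{2(K-1)-a_j}$, so taking $a_j\to\infty$ and $K=K(j)\to\infty$ slowly makes $\eta_j$ super-polynomially small in $|m_j|$, whence $\sum_j\eta_j<\infty$. By induction the mass at $m_{j-1}$ at time $t_j$ then remains $\geq1-C\sum_{i<j}\eta_i\geq\tfrac12$, so the $H^s$ lower bound persists and the leaked mass never feeds back.

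It remains to choose the parameters. Decay of $V$: on $I_j$ the leading term gives $\|\partial_t^kV(t)\|_{H^s}\lesssim_k\varepsilon_j\,\omega_j^{\,k}\,\langle k_j\rangle^s\asymp|m_j|^{2k+s-a_j}\to0$ as $j\to\infty$ for every fixed $k,s$, precisely because $a_j\to\infty$; this is \eqref{decay}. Growth: for $t\in I_j$, $\|u(t)\|_{H^s}\gtrsim|m_{j-1}|^s$, while $t_{j+1}=\sum_{i\leq j}T_i\leq T_j^2$ for $j$ large gives $\log t\leq\log t_{j+1}\lesssim a_j\log|m_j|$; thus \eqref{grate} follows once $|m_{j-1}|\geq(\log t_{j+1})^{1-\delta}$ for all large $j$. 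Choosing $|m_j|\asymp|m_{j-1}|^2$ in a generic direction and $a_j\asymp|m_j|^{1/2}\asymp|m_{j-1}|$ yields $\log t_{j+1}\lesssim|m_{j-1}|\log|m_{j-1}|$, so $(\log t_{j+1})^{1-\delta}\lesssim(|m_{j-1}|\log|m_{j-1}|)^{1-\delta}=o(|m_{j-1}|)$ and the inequality holds for $j$ large; the finitely many early stages are absorbed into the constant $c_{\delta,s}$ using $\|u(t)\|_{H^s}\geq\|u(t)\|_{L^2}=\|u(0)\|_{L^2}>0$ there. Finally, $V$ is a locally finite sum of smooth functions, hence $C^\infty$ on $[0,+\infty)\times\mathbb{T}^2$, and $u$ remains essentially finitely supported in frequency with super-polynomially decaying tails, so $u\in C^\infty([0,+\infty)\times\mathbb{T}^2)$ and solves \eqref{main}; this establishes Theorem~\ref{result}.
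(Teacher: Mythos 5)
Your proposal pursues the same overall program as the paper --- Fourier reduction, isolating resonant couplings, driving an explicit energy cascade along a lacunary sequence of modes, then a perturbative closure --- but it takes a genuinely different route through it, and the differences are worth spelling out. The paper fixes the time oscillation of the potential a priori as $\sin(|n|^2 t)$ attached to each space mode $e^{in\cdot x}$; because $\sin$ carries both phases $e^{\pm i|n|^2 t}$, every lit mode $\pm l_k$ produces \emph{two} resonant links from $m_k$ (one to $m_{k+1}=m_k+l_k$, one to $s_k=m_k-l_k$), forcing the $3\times3$ chain of equation \eqref{eqchill} and the ``three-move'' dance. You instead use $\cos(k_j\cdot x-\omega_j t)$ with the time frequency $\omega_j=|m_j|^2-|m_{j-1}|^2$ chosen independently of $|k_j|^2$; the one-sided phase correlation (mode $+k_j$ with $e^{-i\omega_j t}$ only) makes $m_{j-1}\to m_{j-1}-k_j$ and $m_j\to m_j+k_j$ non-resonant, so the block closes on $\{m_{j-1},m_j\}$ and the mechanism is a single $2\times2$ full swap per stage. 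This is a cleaner and arguably more transparent resonant mechanism. The error control also differs: the paper integrates by parts once and leans entirely on the super-exponential decay $\beta_k=|l_k|^{-|l_k|}$ and a backward Gronwall/Cauchy-sequence scheme in $\ell^1$, while you integrate by parts $K$ times against the resonant propagator and push the error forward in time. Both are viable; note that in your setting you do not actually need $K\to\infty$: the non-resonant transition frequencies away from the occupied modes are all $\nu=O(|k_j|^2)\sim|m_j|^2$ (not merely $|\nu|\geq1$), so a fixed $K=2$ already gives a net gain $|\nu|^{-K}\cdot|m_j|^{2(K-1)}\lesssim|m_j|^{-2}$, and the flexibility in $a_j$ does the rest.

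Two points in your sketch are gaps that need to be closed to make this a proof. First, the non-degeneracy conditions on the sequence $(m_j)$: you invoke ``finitely many non-degeneracy inequalities on the directions of the $k_i$'' keeping earlier modes and their leaked mass off the resonant lines $\{m:(m-m_{j-1})\cdot k_j=0\}$. This is analogous to the paper's Lemma 2.3 with properties $(P_1)$--$(P_{10})$, which is a non-trivial inductive construction; a genuine proof must specify the construction of $(m_j)$ and verify that \emph{all} modes carrying mass (including the ever-widening set of leakage modes $m_{i-1}+n k_i$ from all earlier stages, not just $m_0,\dots,m_{j-2}$) stay off the stage-$j$ resonant lines. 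Second, the truncation and the propagator derivative estimate: the bound $\|\partial_t^\ell U\|\lesssim_\ell\varepsilon_j|m_j|^{2(\ell-1)}$ is an operator-norm estimate on $\ell^2(\Z^2)$ and is fine for $L^2$ mass, but the claim that ``the tails, reachable only through many $\pm k_j$-shifts, stay super-polynomially small'' is doing real work in justifying the restriction to a finite-dimensional block and in the $H^s$ estimates. You should either prove a weighted estimate that quantifies the decay off the two active modes (the paper sidesteps this by working directly in $\ell^1$ and summing), or carry out the truncation explicitly. With these two points made precise, your argument would be a legitimate, somewhat simpler alternative to the paper's; as written it is a correct strategy with the combinatorial lemma and the frequency-localization step left as assertions.
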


\myindent We will in the last section explore possible upper bounds for the decay rate of $V$, which is subpolynomial, see \eqref{decayV}.

\subsection{Earlier work}

\myindent The first exemple of unbounded growth of the Sobolev norms for the Schrödinger equation \eqref{main} on the torus $\mathbb{T}^2$ was given by Bourgain in \cite{bourgain1999growtha}, although the potential $V$ is chosen to be \textit{quasiperiodic}. Bourgain proves that a logarithmic growth of the Sobolev norms can be achieved in this setting, and that is is optimal. Bourgain also studied the case of a random behaviour in time with certain smoothness conditions in \cite{bourgain1999growthb}. Furthermore, Bourgain proves in those articles that with a bounded smooth potential $V$ then the growth in any norm $H^s$ is bounded by $t^{\eps}$ for all $\eps > 0$ (with a constant that depends upon $s,V,\eps$) ; and that for a potential \textit{analytic} in time the bound can be refined to $(\log(t))^{\alpha}$.

\myindent With regards to the logarithmic growth rate we are able to achieve in the present article, it is necessarily subpolynomial as $V$ is assumed to be smooth and bounded, but we may not use the logarithmic a priori bound as $V(t)$ is not analytic in $t$ in our construction. Still, logarithmic growth rate is nearly optimal as the optimal growth is necessarily subpolynomial.

\myindent The study of upper bounds on the possible growth rate of Sobolev norms of the solutions to linear Schrödinger equation has a long history. The general question can be formulated as follow : consider $u$ a regular solution to 

\begin{equation}\label{linearperteq}
    i\partial_t u = H u + P(t)u
\end{equation}

where $H$ is either the Laplacian $-\Delta$ on a $d$-dimensional torus, either more generally, when the domain is $\R^d$ or even a manifold, a time-independent self-adjoint nonnegative operator with some assumptions on its spectrum, and $P(t)$ is a smooth time-dependent family of pseudo-differential operators of order strictly lower than 2. Then one can try and prove upper bound on the growth rate of $\|u(t)\|_{H^s}$ as $t\to \infty$.

\myindent \cite{maspero2017time} proved, along with global well-posedness, $t^{\eps}$ upper bound on the growth rate in the case where $H$ has an increasing spectral gap (as is the case for the Laplacian on Zoll manifolds) and $P(t)$ is a smooth perturbation. This bound can be improved to $(\log(t))^{\gamma}$ for some $\gamma > 0$ when $P(t)$ is analytic in time, which is reminiscent of Bourgain's bound. Using those results, \cite{bambusi2021growth} proves $t^{\eps}$ upper bounds on the growth rate of solutions to \eqref{linearperteq} in an abstract setting, which includes in particular the case where $H$ is the Harmonic Oscillator in $\R^d$ and $P(t)$ is a pseudodifferential operator of order strictly lower than $H$ depending in a quasiperiodic way on time. The first result of a $t^{\eps}$ upper bound with an \textit{unbounded} $P(t)$ was obtained in \cite{bambusi2022growthb} on the torus $\mathbb{T}^d$ with $H = -\Delta$. Finally, $t^{\eps}$ upper bounds have been proved for general hamiltonians of quantum integrable systems in \cite{bambusi2022growtha}.

\quad

\myindent Regarding the dual question of exhibiting growth of Sobolev norms in solutions to \eqref{linearperteq}, the recent articles of Maspero \cite{maspero2022growth} and \cite{maspero2023generic} proved the existence of solutions with (unbounded) polynomial growth in the case where $H$ has a fixed spectral gap and $P(t)$ is a potential \textit{periodic} in time, using a resonance phenomenon. Loosening the time smoothness hypothesis, Erdogan, Killip and Schlag showed genericity of Sobolev norms growth when the potential is a stationary Markov process in \cite{erdougan2003energy}. See also \cite{eliasson2009reducibility}, \cite{delort2010growth},\cite{wang2008logarithmic}.

\myindent Regarding potentials whose Sobolev norms decay to zero with time more specifically, Raphael and Faou were able to exhibit logarithmic growth in the context where $H = -\Delta + |x|^2$ is the harmonic oscillator on $\R^2$ in \cite{faou2020weakly}. Their method relies on \textit{quasiconformal modulations} of so-called \textit{bubble} solutions of the unperturbed Schrödinger equation. It is not surprising that we are able to exhibit logarithmic growth on the torus as the setting is similar. Indeed, both the harmonic oscillator on $\R^2$ and the laplacian on the torus are operators with compact resolvant and a spectrum with geometric properties (as it is formed of points in a lattice) which allows for explicit resonance mechanism. Let us note that the author was able to prove in \cite{chabert2024weakly} that their method extends to the case where the cubic nonlineariy $u|u|^2$ is added to the equation, using a similar approximation scheme than in the present article.

\quad

\myindent The method we shall use here is inspired by the seminal work \cite{colliander2010transfer} refined by \cite{guardia2015growth}. Indeed, we use that on the two-dimensional torus, eigenfunctions of the Laplacian are given by $e^{in\cdot x}$ for $n \in \Z^2$, with eigenvalue $|n|^2$. The lattice structure is then used to produce resonance phenomena between carefully chosen frequencies of the Fourier decomposition of the solution $u$. The idea is that only certain \textit{resonant} interaction will dominate the behaviour of the solution, thus, using an arbitrarily small potential, we are able to transfer the energy of the solution to higher and higher frequencies, leading to growth of Sobolev norms.

\subsection{Idea of the proof}

\myindent The first step of the proof is directly inspired by \cite{colliander2010transfer}. In section 2, we decompose the equation \eqref{main} in Fourier frequencies, thus reducing it to an infinite-dimensional ODE on the Fourier frequencies $(a_n(t))$ of the solution. This enables us to exhibit some \textit{resonant interactions} between Fourier frequencies, which will dominate the behaviour of the solution in terms of Sobolev norms. In that spirit, we first study a \textit{Resonant Fourier System} where we drop the non-resonant interactions. We then build a family of Fourier frequencies $(m_n)_{n\geq 0}$, satisfying carefully computed \textit{orthogonality properties}, along which we are able to transfer energy to higher frequencies (as $|m_n| \to \infty$) with a well-tailored potential $V$ for a solution $(a_n(t))$ whose Fourier frequencies are almost supported on the $(m_n)$.

\myindent In section 3, we give a detailed construction of a potential allowing said energy transfer to higher frequencies, thanks to the crucial point that, as we only consider resonant interactions, we may light up only specific Fourier frequencies in the potential, which further reduces the Resonant System to a \textit{sequence of finite-dimensional linear systems} which we can explicitly solve.

\myindent In section 4 and 5, we prove that the solution to the Resonant System yields a solution to the full system up to a perturbation thanks to a Cauchy sequence scheme, thus controlling that the perturbation decays to zero as $t\to \infty$. We finally use the explicit construction of the solution to the Resonant System to deduce lower bounds on the growth of the Sobolev norm of the full solution, thus concluding to the proof of theorem \eqref{result}

\subsection{Acknowledgements}

\myindent I would like to express my deepest thanks to Professor Pierre Germain, who helped me greatly to format the present article through many discussions and proofreading. I would also like to thank Professor Pierre Raphaël for asking me the question solved here.

\section{Fourier decomposition and resonant system}

\subsection{Reduction to a Resonant Fourier System}

\myindent We now show how \eqref{main} can be heuristically approximated by an easier equation, focusing on the \textit{resonant} interactions. Indeed, as we wish to find smooth solutions of \eqref{main} we may write

\begin{equation}
    u(t,x) = \sum_{n\in \Z^2} a_n(t) e^{i(n\cdot x - |n|^2t)}
\end{equation}

\myindent We now set the potential to take the form

\begin{equation}
    V(t,x) = -\sum_{n\in \Z^2} 2v_n(t) \sin(|n|^2 t) e^{in\cdot x}
\end{equation}
where $v_{-n} = v_n$ is real. Thus, we need only find a solution to the $l^2$ system 

\begin{equation}\tag{$\mathcal{FS}$}
    \partial_t a_n = \sum_{m \in \Z^2} a_m(t) v_{n-m}(t) \left(e^{-i\omega_{m,n}^+t} - e^{-i\omega_{m,n}^-t}\right)
\end{equation}
where we set 

\begin{align*}
    \omega_{m,n}^+ &:= |m|^2 + |m-n|^2 - |n|^2\\
    \omega_{m,n}^- &:= |m|^2 - |m-n|^2 - |n|^2
\end{align*}

\myindent Now, in the spirit of \cite{colliander2010transfer}, we expect that the resonant interaction will dominate, that is interaction between frequencies $m,n$ such that one of $\omega_{m,n}^+$ or
$\omega_{m,n}^-$ is zero. We thus denote for $n \in \Z^2$

\begin{align*}
    \Gamma_{res}^+(n) &:= \{m\in \Z^2, \ |m|^2 + |m-n|^2 - |n|^2 = 0\}\\
    \Gamma_{res}^-(n) &:= \{m\in \Z^2, \ |m|^2 - |m-n|^2 - |n|^2 = 0\}
\end{align*}

and define the approximated system

\begin{equation}\tag{$\mathcal{RFS}$}
    \partial_t a_n = \sum_{m\in \Gamma_{res}^+(n)}a_m(t)v_{n-m}(t) \ - \sum_{m\in \Gamma_{res}^-(n)}a_m(t)v_{n-m}(t)
\end{equation}

\myindent We observe that $(\mathcal{RFS})$ conserves the $l^2$ norm. Indeed 

\begin{align*}
    \frac{d}{dt} \|(a_n)\|_{l^2}^2 = 2Re\left(\sum_{n \in \Z^2} \sum_{m \in \Gamma_{res}^+(n)} \overline{a_n(t)} a_m(t) v_{n-m}(t) - \sum_{n \in \Z^2} \sum_{m \in \Gamma_{res}^-(n)} \overline{a_n(t)} a_m(t) v_{n-m}(t)\right)
\end{align*}

\myindent However, $m \in \Gamma_{res}^+(n)$ if and only if $n \in \Gamma_{res}^-(m)$. Using moreover that $v_{-k} = v_k$ we see that the RHS equals zero.

\subsection{Geometric interpretation of the resonant frequencies}

\myindent Now, we turn our attention to the geometric interpretation of the equation $\omega_{m,n}^{+/-}$ = 0 : we first see that $\omega_{m,n}^+ = 0$ if and only if 

 \begin{equation}
    \begin{cases} m + (n-m) &= n \\
    |m|^2 + |n-m|^2 & = |n|^2 \end{cases}
    \end{equation}
which means that $m$ is orthogonal to $n-m$. This can be reformulated by saying that $m$ resonates with the $m+l$ where $l\in \mathbb{Z}^2$ is orthogonal to $m$.

\myindent Similarly we see that $\omega_{m,n}^- = 0$ if and only if $(n-m)$ is orthogonal to $n$ ; which finally means that $m$ and $n$ are resonant frequencies if one of $m$ or $n$ is the sum of the other one and of an orthogonal vector. We may sum those facts up in a lemma.

\begin{lemma}
    For all $n,m \in \Z^2$, $m\in \Gamma_{res}^+(n)$ if and only if $m$ and $n-m$ are orthogonal. Moreover, $m \in \Gamma_{res}^-(n)$ if and only if $n$ and $n-m$ are orthogonal.
\end{lemma}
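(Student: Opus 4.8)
The plan is to prove both equivalences by nothing more than expanding the relevant squared norms with the bilinear form on $\Z^2 \subset \R^2$, using in each case the appropriate two-term decomposition of one of the vectors.

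First I would handle $\Gamma_{res}^+(n)$. By definition, $m \in \Gamma_{res}^+(n)$ means $\omega_{m,n}^+ = |m|^2 + |m-n|^2 - |n|^2 = 0$, and since $|m-n|^2 = |n-m|^2$ this is equivalent to $|m|^2 + |n-m|^2 = |n|^2$. Writing $n = m + (n-m)$ and expanding gives $|n|^2 = |m|^2 + 2\scal{m}{n-m} + |n-m|^2$, so the condition reduces to $\scal{m}{n-m} = 0$, i.e.\ $m \perp n-m$, which is exactly the claim.

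For $\Gamma_{res}^-(n)$, the condition $\omega_{m,n}^- = |m|^2 - |m-n|^2 - |n|^2 = 0$ reads $|m|^2 = |n-m|^2 + |n|^2$. Here I would instead use the decomposition $m = n - (n-m)$, whose expansion yields $|m|^2 = |n|^2 - 2\scal{n}{n-m} + |n-m|^2$; comparing with the previous identity forces $\scal{n}{n-m} = 0$, i.e.\ $n \perp n-m$, as desired.

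Since both steps are just instances of $|a+b|^2 = |a|^2 + 2\scal{a}{b} + |b|^2$, there is no genuine obstacle; the only point requiring a little care is selecting the correct split ($n = m + (n-m)$ in the first case, $m = n - (n-m)$ in the second) so that the cross term isolates precisely the orthogonality relation one wants. The lemma merely records, in clean algebraic form, the geometric picture that a resonant pair differs by a vector orthogonal to one of its two endpoints — the structural fact that will later guide the explicit construction of the frequency family $(m_n)$.
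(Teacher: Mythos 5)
Your proof is correct and follows the same route as the paper: both read $\omega_{m,n}^{\pm}=0$ as a Pythagorean identity and expand the squared norm to isolate the cross term $\scal{m}{n-m}$ (resp.\ $\scal{n}{n-m}$). The paper is just more terse, stating the reformulation as a system and invoking the geometric picture directly, whereas you spell out the bilinear expansion; the content is identical.
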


\subsection{Explicit family of resonant frequencies and further reduction}

\myindent We shall now build a potential $(v_m(t))$ and a specific solution to $(\mathcal{RFS})$ by constructing two families $(m_k)$ and $(l_k)$, $k\geq 0$ of vectors of $\Z^2$ which satisfies good orthogonality properties. Namely, in some sense, we impose that there are no exceptional resonances.

\begin{lemma}\label{properties}
    There exists two families $(m_k)_{k\geq 0},(l_k)_{k\geq 0}$ of vectors of $\Z^2$ such that
\begin{align*}
    (P_1)\quad & m_k \neq 0,\quad l_k\neq 0 \\
    (P_2) \quad & m_k \perp l_{k'} \quad \iff \quad  k = k' \\
    (P_3) \quad & m_{k+1} = m_k + l_k \\
    (P_4) \quad & \forall k,k' \quad m_k \quad \text{is not orthogonal to}\quad m_{k'} \ \text{and is not orthogonal to} \ m_{k'} - l_{k'}\\
    (P_5) \quad &\forall k,k' \quad m_k - l_k \quad \text{is not orthogonal to} \quad l_{k'}\\
    (P_6)\quad & \forall k'\neq k+1 \quad m_{k'} - l_k \quad \text{is not orthogonal to} \quad l_k\\
    (P_7) \quad & \forall k,k' \quad m_{k'} - l_{k'} - l_k\quad \text{is not orthogonal to} \quad l_k\\
    (P_8) \quad & \forall k,k' \quad l_k + m_{k'} \quad \text{is not orthogonal to} \quad l_k \\
    (P_9) \quad & \forall k\neq k' \quad l_k + m_{k'} - l_{k'} \quad \text{is not orthogonal to} \quad l_k\\
    (P_{10}) \quad & |l_{k+1}| > |l_k| + 1 
\end{align*}

\myindent Moreover, we can find families such that there exists universal constants $C > 1 > c$ such that for all $n\geq 1$ there holds
\begin{align*}
    c(n-1)! &\leq |m_n|\leq C^n (n-1) ! \\
    cn! &\leq |l_n| \leq C^n n! 
\end{align*}

\end{lemma}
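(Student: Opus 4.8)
The plan is to construct the two families explicitly and inductively, choosing at each step a vector $l_k$ orthogonal to $m_k$ that is "generic enough" to avoid all the finitely many orthogonality obstructions encoded in $(P_2)$ and $(P_4)$–$(P_9)$, while also respecting the size constraints. The key observation is that for a fixed nonzero $m \in \Z^2$, the vectors orthogonal to $m$ form a rank-one sublattice $\Z m^\perp$, where $m^\perp$ denotes the vector $m$ rotated by $\pi/2$; so $l_k$ must be of the form $\lambda_k m_k^\perp$ for some nonzero integer $\lambda_k$, and then $m_{k+1} = m_k + \lambda_k m_k^\perp$ is forced by $(P_3)$. Thus the entire construction reduces to choosing a sequence of nonzero integers $(\lambda_k)_{k\geq 0}$, starting from some $m_0 \neq 0$, and the task is to show that $\lambda_k$ can be chosen (very rapidly growing) so that all the properties hold.

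First I would fix $m_0 = (1,0)$, say, and set up the induction: suppose $m_0, \dots, m_k$ and $\lambda_0, \dots, \lambda_{k-1}$ have been chosen so that all properties hold among indices $\leq k$. To choose $\lambda_k$, note that each forbidden condition — e.g. "$m_{k'} \perp l_k$" for some $k' \leq k$, or "$m_{k'} - l_{k'} \perp l_k$", or "$l_k + m_{k'} \perp l_k$", etc. — is, once we substitute $l_k = \lambda_k m_k^\perp$, either a single linear (hence polynomial) equation in the scalar $\lambda_k$ with not-identically-zero left side, or automatically satisfied. For instance $(P_8)$ reads $\scal{\lambda_k m_k^\perp + m_{k'}}{\lambda_k m_k^\perp} = \lambda_k^2 |m_k|^2 + \lambda_k \scal{m_{k'}}{m_k^\perp}$, which vanishes only for $\lambda_k$ in a finite set (the nonzero requirement on $\lambda_k$ already handles the root $\lambda_k=0$); $(P_2)$ for $k'=k$ is automatic and for $k' < k$ reduces to avoiding finitely many values; the conditions involving the *future* vector $m_{k'}$ with $k' > k$, such as $(P_4)$ ($m_k \not\perp m_{k'}$ and $m_k \not\perp m_{k'}-l_{k'}$) and $(P_6)$, $(P_7)$, $(P_9)$, are the delicate ones and I address them below. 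Since at each stage only finitely many values of $\lambda_k$ are excluded, and $|\lambda_k|$ can be taken arbitrarily large, we can always pick a valid $\lambda_k$; choosing it of size roughly $C_0^k k$ with $C_0$ large then yields $|l_k| = |\lambda_k||m_k| $ and $|m_{k+1}| \sim |l_k|$, and unwinding the recursion gives $|m_n| \asymp (n-1)!$ and $|l_n| \asymp n!$ up to the stated exponential factors. Property $(P_{10})$ is then immediate from the growth.

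The main obstacle will be the conditions that constrain $\lambda_k$ through the *later* vectors $m_{k'}$, $k' > k$ — namely $(P_4)$, $(P_6)$, $(P_7)$, $(P_9)$ — because $m_{k'}$ itself depends on all the $\lambda_j$ for $k \leq j < k'$ which are chosen only afterwards. I would handle this by a diagonal/bootstrapping argument: since $m_{k'} = m_{k+1} + \sum_{j=k+1}^{k'-1}\lambda_j m_j^\perp$ and each $m_j^\perp$ for $j > k$ is, by construction, a large perturbation in a direction controlled by $\lambda_{j-1}$, one shows that for $|\lambda_j|$ growing fast enough the vector $m_{k'}$ is never orthogonal to the already-fixed $m_k$ (roughly: $\scal{m_k}{m_{k'}}$ is dominated by the contribution of the largest term, which is nonzero by an inductive genericity hypothesis, so it cannot cancel). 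Concretely, I would strengthen the induction hypothesis to record quantitative lower bounds like $|\scal{m_k}{m_{k'}}| \geq$ (something comparable to $|m_{k'}|$) for all $k' > k$, and similarly for the other pairings appearing in $(P_4)$–$(P_9)$; these bounds propagate because adding $\lambda_{k'} m_{k'}^\perp$ to form $m_{k'+1}$ changes each inner product $\scal{m_k}{\,\cdot\,}$ by $\lambda_{k'}\scal{m_k}{m_{k'}^\perp}$, and one checks $\scal{m_k}{m_{k'}^\perp} \neq 0$ (it equals $-\scal{m_k^\perp}{m_{k'}}$, nonzero by the $(P_4)$-type hypothesis at level $k'$ with $m_k^\perp \parallel l_k$... ), so the inner product genuinely moves and, with $|\lambda_{k'}|$ large, stays away from zero with a good lower bound. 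Organizing this chain of quantitative genericity estimates so that every one of $(P_1)$–$(P_{10})$ falls out simultaneously is the real content of the proof; once the bookkeeping is set up, each individual verification is a one-line computation with the rotation-by-$\pi/2$ identity $\scal{a^\perp}{b^\perp} = \scal{a}{b}$ and $\scal{a}{b^\perp} = -\scal{a^\perp}{b}$.
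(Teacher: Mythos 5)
Your starting point matches the paper exactly: fix $m_0$, observe that orthogonality to $m_k$ in $\Z^2$ forces $l_k = \lambda_k\, m_k^\perp$ for some nonzero integer $\lambda_k$, and then choose $\lambda_k$ greedily by excluding finitely many bad values. That part is right. But the paragraph you flag as ``the real content of the proof'' — the alleged need for a bootstrapping argument with quantitative lower bounds on inner products because conditions like $(P_4)$, $(P_6)$, $(P_7)$, $(P_9)$ ``constrain $\lambda_k$ through the later vectors $m_{k'}$'' — is a misreading of the inductive structure, and it is precisely the point where your proposal stalls.

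None of the properties ever constrains $\lambda_k$ through an as-yet-unchosen $m_{k'}$. Each orthogonality condition in $(P_2)$, $(P_4)$--$(P_9)$ involves a finite list of vectors, and it only becomes a \emph{well-defined} equation once all of those vectors are fixed, i.e.\ once $\lambda_0,\dots,\lambda_j$ are chosen for $j$ equal to the largest index on which those vectors depend. So a condition of the form ``$m_k \not\perp m_{k'}$'' with $k' > k$ is a constraint on $\lambda_{k'-1}$, to be imposed when you choose $\lambda_{k'-1}$ — not a constraint on $\lambda_k$. When you are at stage $n$ choosing $\lambda_n$ (hence $l_n$ and $m_{n+1}$), the new conditions that become checkable are exactly those involving $l_n$ or $m_{n+1}$ together with the already-fixed $m_0,\dots,m_n,l_0,\dots,l_{n-1}$. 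Substituting $l_n = \lambda_n m_n^\perp$, each of them is a nontrivial affine or quadratic equation in the single integer $\lambda_n$ (using that $m_n^\perp\cdot v \neq 0$ for the relevant $v$, which follows from the induction hypotheses), hence forbids $O(1)$ values. There are $O(n)$ such conditions, so $O(n)$ values of $\lambda_n$ are excluded in total, and one can pick $\lambda_n$ with $n+1 \leq \lambda_n \leq C(n+1)$. No strengthened quantitative induction hypothesis on inner products is needed, and trying to set one up the way you sketch would be both harder and vaguer than what the problem requires.

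Two smaller points. First, your suggested size $\lambda_k \sim C_0^k k$ is too large: it would give $|m_n|\gtrsim C_0^{n^2/2}(n-1)!$, which does \emph{not} fit inside the $C^n(n-1)!$ upper bound required in the lemma. Taking $\lambda_k\asymp k$ (as the paper does) gives $|m_{n+1}| = \sqrt{1+\lambda_n^2}\,|m_n| \asymp n|m_n|$ and hence $|m_n|\asymp C^n (n-1)!$, which is what is claimed. Second, you should make the induction hypothesis explicit: at stage $n$ one assumes the properties hold for all index pairs for which the relevant vectors are already defined; in particular, conditions involving $l_{n+1}$ (such as $m_k\not\perp m_{n+1}-l_{n+1}$ in $(P_4)$) are \emph{not} verified at stage $n$ but at stage $n+1$. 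With that bookkeeping in place, the greedy argument closes and the bootstrapping machinery is superfluous.
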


\myindent At first glance these properties may seem a lot, but it follows quite directly from geometric observations that they greatly reduces the system if we choose the potential with nonzero Fourier frequencies supported in the set $\{\pm l_k\}_{k\geq 0}$. More precisely, before proving lemma \eqref{properties}, we state and prove the following lemma 

\begin{lemma}
    Set $\Lambda := \{\pm l_k, \ k\geq 0\}$ and $\Lambda' := \{m_k,\ k\geq 0\}$. Set moreover $\Sigma := \{m_k - l_k, \ k\geq 0\}$. Assume $(a_n(t))_{n\in \Z^2}$, is a solution to $(\mathcal{RFS})$ with potential $(v_n(t))_n$ such that $(a_n(0))$ is supported in $\Lambda'\cup \Sigma$ (in the sense that $a_n(0) = 0$ whenever $n\notin\Lambda'\cup \Sigma$). If $(v_n(t))_n$ is supported in $\Lambda$ for all $t\geq 0$, then $(a_n(t))$ is supported in $\Lambda'\cup \Sigma$ for all $t\geq 0$.

    \myindent Moreover, denote $p_k(t) := a_{m_k}(t)$, $s_k(t) := a_{m_k - l_k}(t)$ and $r_k(t) := v_{l_k}(t)$ (with the convention that $p_{-1} = r_{-1} = 0$). The system $(\mathcal{RFS})$ reduces to 
    \begin{equation}\label{eqchill}
    \forall k\geq 0 \quad \begin{cases}
    \partial_t p_k =& p_{k-1} r_{k-1} - p_{k+1} r_k - s_k r_k \\
    \partial_t s_k =& p_k r_k
    \end{cases}
\end{equation}
\end{lemma}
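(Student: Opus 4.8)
The plan is to verify the two claims — invariance of the support and the reduced system \eqref{eqchill} — by directly examining which terms on the right-hand side of $(\mathcal{RFS})$ can be nonzero, using the orthogonality characterisation of $\Gamma_{res}^{\pm}$ from Lemma 2.1 together with properties $(P_1)$--$(P_{10})$. First I would fix $n\in\Z^2$ and ask when $\partial_t a_n$ receives a contribution: a term $a_m v_{n-m}$ is nonzero only if $v_{n-m}\neq 0$, i.e. $n-m=\pm l_k$ for some $k$, \emph{and} (provided the support is still contained in $\Lambda'\cup\Sigma$ at the time under consideration) $m$ is either some $m_j$ or some $m_j-l_j$. So the analysis splits into a finite number of cases according to the sign $\pm l_k$, whether $m=m_j$ or $m=m_j-l_j$, and whether we are in the $\Gamma_{res}^+$ or $\Gamma_{res}^-$ sum. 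In each case the resonance condition is an orthogonality relation (between $m$ and $n-m$ for $\Gamma^+_{res}$, between $n$ and $n-m$ for $\Gamma^-_{res}$), and properties $(P_2)$--$(P_9)$ are precisely designed so that this orthogonality forces the indices to match up in only the ways that produce the terms appearing in \eqref{eqchill}; all other putative interactions are killed because the relevant vectors are declared non-orthogonal.

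More concretely, I would organise the bookkeeping as follows. Take $n=m_k$. The incoming term must have $n-m=\pm l_j$. If $n-m=l_j$ then $m=m_k-l_j$; for this to lie in $\Lambda'\cup\Sigma$ one needs $m_k-l_j=m_{k'}$ or $m_k-l_j=m_{k'}-l_{k'}$, and $(P_3)$ (which gives $m_k-l_k=m_{k-1}$ hence $m_k-l_{k-1}$ is not of the forbidden shape unless $j=k-1$, $m=m_{k-1}$) together with $(P_4)$--$(P_9)$ pins down the admissible $(j,\mathrm{sign},\Gamma^{\pm})$. One checks: $m=m_{k-1}$, $n-m=l_{k-1}$ contributes through $\Gamma_{res}^+(m_k)$ since $m_{k-1}\perp l_{k-1}$, giving $+p_{k-1}r_{k-1}$; $m=m_{k+1}$, $n-m=-l_k$ contributes through $\Gamma_{res}^-(m_k)$ since $m_k\perp l_k$, giving $-p_{k+1}r_k$; $m=m_k-l_k=s_k$'s index, $n-m=l_k$ contributes through $\Gamma_{res}^-(m_k)$ giving $-s_kr_k$; and every other candidate is excluded by one of $(P_2),(P_4)$--$(P_9)$. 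This yields the first line of \eqref{eqchill}. Taking $n=m_k-l_k$ and repeating the case analysis, the only surviving interaction is $m=m_k$, $n-m=-l_k$ through $\Gamma_{res}^+(m_k-l_k)$ (using $(m_k-l_k)\perp ?$ — actually here one uses that $m$ and $n-m=-l_k$ must be orthogonal, i.e. $m_k\perp l_k$), giving $\partial_t s_k=p_kr_k$, with all other terms excluded by $(P_5),(P_7),(P_9)$. Finally, to confirm \emph{support invariance} one notes that the computed right-hand side only ever moves mass among the $p_k$ and $s_k$, and that no $n\notin\Lambda'\cup\Sigma$ can receive a nonzero contribution — again by running the same case analysis with $n$ a generic vector and invoking $(P_4)$--$(P_9)$ to see that no product $a_m v_{n-m}$ with $m\in\Lambda'\cup\Sigma$, $n-m\in\Lambda$ and the relevant resonance condition can have $n\notin\Lambda'\cup\Sigma$; a continuity/ODE-uniqueness argument (the vector field is locally Lipschitz on $\ell^2$, or one works with the integral equation) then propagates the support condition from $t=0$ to all $t\ge0$.

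The main obstacle is purely combinatorial bookkeeping: there are a priori many $(j,k')$ pairs and sign choices to rule out, and one must be careful that properties $(P_2)$--$(P_9)$ really do cover \emph{every} spurious case — in particular the borderline coincidences such as $m_{k'}-l_{k'}=m_{k'-1}$ from $(P_3)$, which is why $(P_6)$ carries the exception "$k'\neq k+1$" and $(P_9)$ the exception "$k\neq k'$". I would therefore present the argument as a table: rows indexed by the shape of $m$ (namely $m_{k'}$ or $m_{k'}-l_{k'}$) and the value of $n-m$ (namely $+l_j$ or $-l_j$), columns for the $\Gamma_{res}^+$ versus $\Gamma_{res}^-$ membership, each cell recording either the resulting term of \eqref{eqchill} or the property of Lemma \ref{properties} that forbids the cell. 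Once the table is complete, both assertions of the lemma are immediate, and no genuine analysis beyond the elementary ODE-uniqueness remark is needed.
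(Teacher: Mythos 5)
Your proposal follows the paper's proof essentially verbatim: the same case analysis indexed by the shape of $m$ ($m_{k'}$ or $m_{k'}-l_{k'}$), the sign of $n-m=\pm l_j$, and membership in $\Gamma_{res}^+$ versus $\Gamma_{res}^-$, with each spurious cell killed by one of $(P_2)$--$(P_9)$, followed by linear-ODE uniqueness for the support invariance and a direct reading-off of the three surviving contributions to $\partial_t p_k$ and the single one to $\partial_t s_k$. One small slip worth fixing: $(P_3)$ gives $m_k - l_{k-1} = m_{k-1}$, not $m_k - l_k = m_{k-1}$, but this does not affect the bookkeeping or the conclusion.
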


\begin{proof}
    As $v_n(t) = 0$ whenever $n\notin \Lambda$, $(\mathcal{RFS})$ reduces to 
    \begin{equation}\label{interm}
        \partial_t a_n = \sum_{m\in \Gamma_{res}^+(n),\ n-m \in \Lambda}a_m(t)v_{n-m}(t) \ - \sum_{m\in \Gamma_{res}^-(n),\ n-m \in \Lambda}a_m(t)v_{n-m}(t)
    \end{equation}
    \myindent In order to prove the first part of the lemma, we need only show that whenever $n \notin \Lambda'\cup \Sigma$ then those $m$ that appear on the RHS of \eqref{interm} are also not in $\Lambda'\cup \Sigma$. Indeed, the system then reduces to a linear system with zero initial condition on $\Z^2\backslash \Lambda'\cup\Sigma$ so by uniqueness there stands $a_n(t) = 0$ for all $t$ whenever $n\notin\Lambda'\cup \Sigma$. 

    \myindent Take $n \notin \Lambda'\cup \Sigma$. We claim that if $m \in \Lambda'\cup \Sigma$ satisfies $n-m \in \Lambda$, then $m \notin \Gamma_{res}^+(n)\cup \Gamma_{res}^-(n)$. Indeed, assume first that $m = m_k$ for some $k$ and $n - m \in \Lambda$. Then there exists $k' \geq 0$ such that $n-m = \pm l_{k'}$.
    
    \myindent i. If $n = m_k + l_{k'}$, then $k\neq k'$ otherwise $n = m_{k+1}\in \Lambda'$, but then $m_k$ is not orthogonal to $n - m_k = l_{k'}$ thanks to $(P_2)$ thus $m_k \notin \Gamma_{res}^+(n)$. Similarly, $n-m_k = l_{k}$ is not orthogonal to $n = m_k + l_{k'}$ thanks to $(P_8)$

    \myindent ii. If $n = m_k - l_{k'}$ then $k' \neq k$ otherwise $n \in \Sigma$ and $k' \neq k-1$ otherwise $n = m_{k-1} \in \Lambda'$. Thus, $m_k \notin \Gamma_{res}^+(n)$ as $m_k$ is not orthogonal to $-l_{k'}$ by $(P_2)$, and $m_k \notin \Gamma_{res}^-(n)$ as $l_{k'} = n - m_k$ is not orthogonal to $m_k - l_{k'} = n$ thanks to $(P_6)$.

    \myindent Now, assume that $m = m_k - l_k$ for some $k\geq 0$ and $n - m = \pm l_{k'}$ for some $k' \geq 0$

    \myindent i. If $n = m_k - l_k + l_{k'}$ then $k\neq k'$ as $n\notin \Lambda'$ so $m_k - l_k$ is not orthogonal to $l_{k'}$ thanks to $(P_5)$ thus $m_k - l_k \notin \Gamma_{res}^+(n)$ and $l_{k'}$ is not orthogonal to $m_k - l_k + l_{k'}$ thanks to $(P_9)$ thus $m_k - l_k \notin \Gamma_{res}^-(n)$

    \myindent ii. Finally if $n = m_k - l_k - l_{k'}$ then as $m_k - l_k$ is not orthogonal to $-l_{k'}$ thanks to $(P_5)$ we find that $m_k - l_k \notin \Gamma_{res}^+(n)$ and as $m_k - l_k - l_{k'}$ is not orthogonal to $-l_{k'}$ thanks to $(P_7)$ we also find that $m_k - l_k \notin \Gamma_{res}^-(n)$

    \quad

    \myindent In order to prove the second part of the lemma, we follow the same steps. Take $k\geq 0$. First, let $m\in \Gamma_{res}^+(m_k)\cap (\Lambda' \cup \Sigma)$ such that $m_k - m = \pm l_{k'}$. As $m$ is orthogonal to $l_{k'}$, properties $(P_2)$ and $(P_5)$ yield that $m = m_{k'}$, thus $m_k = m_{k'}\pm l_{k'}$. As $m_k$ is orthogonal to $l_k$, $(P_5)$ yields that necessarily $m_k = m_{k'} + l_{k'} = m_{k' + 1}$ thus $k' = k- 1$ (as from $(P_3)$ and $(P_{10})$ there holds $|m_{i+1}| > |m_i|$) which yields the contribution $p_{k-1}r_{k-1}$ to the RHS of the first equation.

    \myindent Now, let $m \in \Gamma_{res}^-(m_k)\cap(\Lambda'\cup\Sigma)$ such that $m_k - m = \pm l_{k'}$. As $l_{k'}$ is orthogonal to $m_k$ $(P_2)$ yields that $k = k'$ thus $m = m_k \pm l_k$, and both are in $\Gamma_{res}^-(m_k)\cap(\Lambda'\cup\Sigma)$ This yields the contribution $-p_{k+1}r_k - s_k r_k$ to the RHS of the first equation.

    \myindent Finally, take $k\geq 0$ and $m \in \Gamma_{res}^+(m_k - l_k)\cap(\Lambda'\cup \Sigma)$ such that $m_k - l_k - m = \pm l_{k'}$ : as $m$ is orthogonal to $l_{k'}$ we find again that $m = m_{k'}$ and thus that $m_k = l_k + m_{k'} \pm l_{k'}$. If the sign is a minus, properties $(P_9)$ yields that $k' = k$ and thus $m = m_k$ which gives the contribution $p_k r_k$ to the RHS of the second equation (as we remind that $v_{-n} = v_n$ for all $n$). If the sign is a plus, we find that $m_k - l_k = m_{k' + 1}$ is orthogonal to $l_{k' + 1}$ which contradicts properties $(P_5)$.

    \myindent We see moreover that there is not a $m\in\Gamma_{res}^-(m_k - l_k)\cap(\Lambda'\cup \Sigma)$ such that $m_k - l_k - m \in \Lambda$. Indeed by definition of $\Gamma_{res}^-$, this would mean that there is a $k'$ such that $m_k - l_k$ is orthogonal to $l_{k'}$ thus contradicting property $(P_5)$.
\end{proof}

\myindent We now turn to the proof of lemma \eqref{properties}. Choose $m_0 \in \Z^2 \backslash \{0\}$ arbitrarily, for example $m_0 = (1,0)$. As $m_{k+1} = m_k + l_k$ we need only construct the $l_k$ for $k\geq 0$. We will do so by induction. Assume the sequence $(m_k)$ is constructed up to $k = n$ satisfying the properties (which means that $l_0,...,l_{n-1}$ have been constructed). We need to exhibit $l_n\in \Z^2$ (and thus $m_{n+1} = m_n + l_n$) such that the properties still hold up to $k = n + 1$. Define $m$ the vector obtained from $m_n$ by applying a rotation with angle $\pi/2$ (which is orthogonal to $m_n$ and which has the same Euclidean norm). We will show that there is $a \in \N$ such that $n +1 \leq a \leq C(n+1)$ with $C$ a universal constant such that setting $l_n := am$ will do.

\myindent $(P_1)$ always holds.

\myindent In order for $(P_2)$ to hold, observe first that, by construction, $l_n = am$ is orthogonal to $m_n$. Moroever, we need on the one hand that $m_k$ is not orthogonal to $l_n$ for $k\leq n-1$. However, since $m_k \perp l_k$ up to $k = n$ by induction, and since we are in dimension two, this amounts to asking that $l_k$ is not orthogonal to $m_n$ for $k\leq n-1$, which is true by induction by $(P_2)$. On the other hand, we need that $m_{n+1}$ is not orthogonal to $l_k$ up to $k = n$, that is, since $l_k \perp m_k$, and since we are in dimension two, that we need only prove that $m_{n+1} = m_n + am$ is not parallel to $m_k$ for $k\leq n$. It is always true for $k = n$ as $a > 0$ ; and for each $k\leq n - 1$ there is at most one value of $a$ for which $m_{n+1}$ could be parallel to $m_k$ (as $m$ is not parallel to $m_k$ otherwise $m_k$ would be orthogonal to $m_n$ thus contradicting $(P_4)$). This excludes at mot $n$ possible values for $a$.

\myindent In order for $(P_4)$ to hold, we need that $m_{n+1}\cdot m_k \neq 0$ for $k\leq n$. It is always true for $k = n$, and for $k < n$ it means that $m_n\cdot m_k + a m \cdot m_k \neq 0$. Now, $m\cdot m_k \neq 0$ otherwise this would contradict $(P_2)$. Thus, at most $n$ possible values of $a$ are to be excluded. We also need that $m_{n+1}\cdot (m_k - l_k) \neq 0$ for $k \leq n$, which is always true for $k = n$ if we set $a \geq 2$, and it follows from the construction of $(P_5)$ that $m\cdot(m_k - l_k)\neq 0$ as $m_n$ is not parallel to $m_k - l_k$, hence this excludes at most $n$ values of $a$. We finally need that $m_k \cdot (m_n - am) \neq 0$ for $k\leq n-1$ which excludes at most $n$ values of $a$ as $m\cdot m_k \neq 0$.

\myindent In order for $(P_5)$ to hold, we need on the one hand that $m_k - l_k$ is not parallel to $m_{n+1}$ for $k\leq n$, which excludes at most $n$ values of $a$ as this is always true for $k = n$ and as $m_k - l_k$ is not parallel to $m$ for $k < n$ thanks to $(P_4)$. On the other hand, we need that $m_n - l_n = m_n - am$ is not parallel to $m_k$ for $k\leq n$ which again excludes at most $n$ values for $a$ as $m$ is not parallel to $m_k$ for $k < n$ thanks to $(P_2)$.

\myindent In order for $(P_6)$ to hold, we need on the one hand that for $k\leq n-1$, $(m_{n+1} - l_k)\cdot l_k \neq 0$ which is equivalent to $am\cdot l_k \neq cste$. As we know that $m\cdot l_k \neq 0$ (otherwise $m_k$ is orthogonal to $m_n$) this excludes at most $n$ values for $a$. One the other hand, we need that for $k\leq n - 1$ $m_k - am$ is not orthogonal to $am$ which is ensured by the fact that $|m| > |m_k|$.

\myindent In order for $(P_7)$ to hold, we need on the one hand that for $k\leq n-1$ $m_n - am - l_k$ is not orthogonal to $l_k$ which excludes at most $n$ values for $a$ as $m\cdot l_k \neq 0$. On the other hand we need that for $k\leq n-1$ $m_k - l_k - am$ is not orthogonal to $am$ and once again this excludes at most $n$ values for $a$.

\myindent In order for $(P_8)$ to hold, we need on the one hand that for $k\leq n$ $am + m_k$ is not orthogonal to $am$, thus excluding at most $n$ values for $a$, and on the other hand that for $k\leq n-1$ $l_k + m_n + am$ is not orthogonal to $l_k$ which excludes at most $n$ values for $a$ as $m\cdot l_k \neq 0$.

\myindent In order for $(P_9)$ to hold finally, we need on the one hand that for $k\leq n-1$ $am + m_k - l_k$ is not orthogonal to $am$, thus excluding at most $n$ values for $a$, and on the other hand that $l_k + m_n - am$ is not orthogonal to $l_k$, excluding once again at most $n$ values for $a$.

\myindent We thus finally see that any $a \geq 1$ except maybe at most $C(n+1)$ values can be chosen, where $C\geq 2$. Up to taking $C$ a little larger we may thus find $n + 1\leq a \leq C (n + 1)$ such that setting $l_n = am $ enables the induction hypothesis to be satisfied.

\myindent By this procedure we are able to construct sequences for which the desired properties hold. Moreover there holds $n|m_n| \leq |l_n| \leq Cn |m_n|$ and thus $n|m_n| \leq |m_{n+1}|\leq C'n |m_n|$ for $C' = \sqrt{C + 1}$ thus proving the last part of the lemma.

\section{Solution to the resonant system}

\myindent Thanks to the previous section, we are now able to exhibit explicit $(r_k(t))$ and an explicit solution $(p_k(t)), (s_k(t))$ to \eqref{eqchill} for which we control precisely the energy transfer between Fourier frequencies.
We turn to the explicit study of the mechanism that will allow energy transfer between frequencies. We start at $t = 0$ with well-chosen values for $p_0, p_1, s_0$ and set the other $p_k$ and $s_k$ to be zero. The idea is then to locally fully transfer the energy from $(p_k, s_k, p_{k+1})$ to $(p_{k+1}, s_{k+1}, p_{k+2})$ in finite time, thus ensuring that for all given $n$, after a time $T_n$, we have that $p_k = s_k = 0$ for all $k\leq n$. Now, as $(\mathcal{RFS})$ conserves the $l^2$ norm, this ensures that the Sobolev $H^s$ norm is greater that $|m_n|^s$ for $t\geq T_n$.

\subsection{General form of the solution to the linear system}

\myindent Explicitly, find an interval $I = [t_0, t_1]$ and a smooth function $\phi$ on $I$. Find $k\geq 1$. We look at the system

\begin{equation}
    \begin{cases}
    \partial_t p_{k+1}& = \phi(t) p_k \\
    \partial_t p_k &= -\phi(t) p_{k+1} - \phi(t) s_k\\
    \partial_t s_k &= \phi(t) p_k
    \end{cases}
\end{equation}

which corresponds to \eqref{eqchill} when we only light up $r_k(t) = \phi(t)$, that is we set $r_{k'}(t) = 0$ for $k'\neq k$ on $I$. The system can then be written in the form of a simple linear system

\begin{equation}
    \partial_t \begin{pmatrix}
    p_{k+1} \\
    p_k \\
    s_k \end{pmatrix} = \phi(t)A \begin{pmatrix}
    p_{k+1} \\
    p_k \\
    s_k \end{pmatrix}
\end{equation}

where we set

\begin{equation}
    A = \begin{pmatrix}
    0 & 1 & 0 \\
    -1 & 0 & - 1\\
    0 & 1 & 0 \end{pmatrix}
\end{equation}

\myindent Now, the solution with initial condition $\begin{pmatrix}
    p_{k+1}(t_0) \\
    p_k (t_0)\\
    s_k(t_0) \end{pmatrix}$ is given by
    
    \begin{equation}
        \begin{pmatrix}
    p_{k+1}(t) \\
    p_k(t) \\
    s_k(t) \end{pmatrix} = \exp\left(\left(\int_{t_0}^t \phi(s)ds\right) A \right)\begin{pmatrix}
    p_{k+1}(t_0) \\
    p_k(t_0) \\
    s_k(t_0) \end{pmatrix}
    \end{equation}
    
\myindent Now, one can compute

\begin{equation}
    \exp(T A) = \begin{pmatrix} 
    \frac{1}{2}\left(\cos(T\sqrt{2}) + 1\right) & \frac{1}{\sqrt{2}}\sin(T\sqrt{2}) & \frac{1}{2}\left(\cos(T\sqrt{2}) - 1\right)\\
    -\frac{1}{\sqrt{2}}\sin(T\sqrt{2}) & \cos(T\sqrt{2}) & -\frac{1}{\sqrt{2}}\sin(T\sqrt{2})\\
    \frac{1}{2}\left(\cos(T\sqrt{2}) - 1\right) & \frac{1}{\sqrt{2}}\sin(T\sqrt{2}) & \frac{1}{2}\left(\cos(T\sqrt{2}) + 1\right) \end{pmatrix}
\end{equation}

\myindent This explicit matrix allows us to build three moves  in order to transfer a specific configuration from $(p_k, s_k, p_{k+1})$ to $(p_{k+1}, s_{k+1}, p_{k+2})$ in finite time.

\subsubsection{First move}

\myindent Start with

\begin{equation}
    \begin{pmatrix}
    p_{k+1}(t_0) \\
    p_k(t_0) \\
    s_k(t_0) \end{pmatrix} = \begin{pmatrix} \frac{1}{2} \\ -\frac{1}{\sqrt{2}} \\ \frac{1}{2} \end{pmatrix}
\end{equation}

\myindent We set $\phi$ a non-negative $\mathcal{C}^{\infty}$ function with support in $[t_0,t_1]$ such that moreover $\int \phi = \frac{7\pi}{4\sqrt{2}}$. There holds

\begin{equation}
    \begin{pmatrix}
    p_{k+1}(t_1) \\
    p_k(t_1) \\
    s_k(t_1) \end{pmatrix} = \begin{pmatrix} \frac{1}{\sqrt{2}} \\ 0 \\ \frac{1}{\sqrt{2}}\end{pmatrix}
\end{equation}

\subsubsection{Second move}

\myindent If we now set

\begin{equation}
    \begin{pmatrix}
    p_{k+1}(t_0) \\
    p_k(t_0) \\
    s_k(t_0) \end{pmatrix} = \begin{pmatrix} 0 \\ 1 \\ 0 \end{pmatrix}
\end{equation}

\myindent With the integral of $\phi$ being $\frac{\pi}{2\sqrt{2}}$ there holds

\begin{equation}
    \begin{pmatrix}
    p_{k+1}(t_1) \\
    p_k(t_1) \\
    s_k(t_1) \end{pmatrix} = \begin{pmatrix} \frac{1}{\sqrt{2}} \\ 0 \\ \frac{1}{\sqrt{2}} \end{pmatrix}
\end{equation}

\subsubsection{Third move}

\myindent If finally we set 

\begin{equation}
    \begin{pmatrix}
    p_{k+1}(t_0) \\
    p_k(t_0) \\
    s_k(t_0) \end{pmatrix} = \begin{pmatrix} 0 \\ 0 \\ 1 \end{pmatrix}
\end{equation}

and set the integral of $\phi$ to be $\frac{\pi}{\sqrt{2}}$ there holds

\begin{equation}
    \begin{pmatrix}
    p_{k+1}(t_1) \\
    p_k(t_1) \\
    s_k(t_1) \end{pmatrix} = \begin{pmatrix} -1 \\ 0 \\ 0\end{pmatrix}
\end{equation}

\subsection{Idea of the construction of the potential and the resonant solution}

\myindent These easy observations yield the construction both of the potential $(r_k(t))$ and of the solution $(p_k(t)),(s_k(t))$. We may represent the solution $(p_k(t)),(s_k(t))$ as points in the following semi-infinite chain

\quad

\begin{tikzpicture}[
roundnode/.style={circle, draw=green!60, fill=green!5, very thick, minimum size=7mm},
squarednode/.style={rectangle, draw=red!60, fill=red!5, very thick, minimum size=7mm},
]
\node[squarednode]      (p0)                              {$p_0$};
\node[roundnode]        (s0)       [above=of p0] {$s_0$};
\node[squarednode]      (p1)       [right=of p0] {$p_1$};
\node[roundnode]        (s1)       [above=of p1] {$s_1$};
\node[squarednode]      (p2)       [right=of p1]                       {$p_2$};
\node[roundnode]        (s2)       [above=of p2] {$s_2$};
\node[squarednode]      (p3)       [right=of p2] {$p_3$};
\node[roundnode]        (s3)       [above=of p3] {$s_3$};
\node[squarednode]      (p4)       [right=of p3]                       {$p_4$};
\node[roundnode]        (s4)       [above=of p4] {$s_4$};
\node[squarednode]      (p5)       [right=of p4] {$p_5$};
\node[roundnode]        (s5)       [above=of p5] {$s_5$};
\node[squarednode]      (p6)       [right=of p5] {$p_6$};

\draw[<->] (s0.south) -- (p0.north);
\draw[<->] (p0.east) -- (p1.west);
\draw[<->] (p1.north) -- (s1.south);
\draw[<->] (p1.east) -- (p2.west);
\draw[<->] (p2.north) -- (s2.south);\draw[<->] (p2.east) -- (p3.west);
\draw[<->] (p3.north) -- (s3.south);\draw[<->] (p3.east) -- (p4.west);
\draw[<->] (p4.north) -- (s4.south);\draw[<->] (p4.east) -- (p5.west);
\draw[<->] (p5.north) -- (s5.south);\draw[<->] (p5.east) -- (p6.west);
\end{tikzpicture}

\quad

where the arrows represent the possible interactions between the Fourier frequencies induced by the potential $(r_k(t))$

\myindent Assume that, at $t=0$, we start with the configuration 

\quad

\begin{tikzpicture}[
roundnode/.style={circle, draw=green!60, fill=green!5, very thick, minimum size=20mm},
squarednode/.style={rectangle, draw=red!60, fill=red!5, very thick, minimum size=20mm},
]

\node[squarednode]      (p0)                              {$p_0 = -\frac{1}{\sqrt{2}}$};
\node[roundnode]        (s0)       [above=of p0] {$s_0 = \frac{1}{2}$};
\node[squarednode]      (p1)       [right=of p0] {$p_1 = \frac{1}{2}$};
\node[roundnode]        (s1)       [above=of p1] {$s_1 = 0$};
\node[squarednode]      (p2)       [right=of p1]                       {$p_2 = 0$};

\end{tikzpicture}

\quad

\myindent Then using first move if we light up only $r_0$ during an appropriate time we may fully transfer the mass from $p_0$ to $s_0$ and $p_1$ equally.

\quad

\begin{tikzpicture}[
roundnode/.style={circle, draw=green!60, fill=green!5, very thick, minimum size=20mm},
squarednode/.style={rectangle, draw=red!60, fill=red!5, very thick, minimum size=20mm},
]

\node[squarednode]      (p0)                              {$p_0 = 0$};
\node[roundnode]        (s0)       [above=of p0] {$s_0 = \frac{1}{\sqrt{2}}$};
\node[squarednode]      (p1)       [right=of p0] {$p_1 = \frac{1}{\sqrt{2}}$};
\node[roundnode]        (s1)       [above=of p1] {$s_1 = 0$};
\node[squarednode]      (p2)       [right=of p1]                       {$p_2 = 0$};

\draw[<-] (s0.south) -- (p0.north);
\draw[->] (p0.east) -- (p1.west);

\end{tikzpicture}

\quad

\myindent Now, we clear $p_1$ using the second move, that is lighting up only $r_1(t)$ we can fully transfer the mass from $p_1$ to $s_1$ and $p_2$ equally.

\quad

\begin{tikzpicture}[
roundnode/.style={circle, draw=green!60, fill=green!5, very thick, minimum size=20mm},
squarednode/.style={rectangle, draw=red!60, fill=red!5, very thick, minimum size=20mm},
]

\node[squarednode]      (p0)                              {$p_0 = 0$};
\node[roundnode]        (s0)       [above=of p0] {$s_0 = \frac{1}{\sqrt{2}}$};
\node[squarednode]      (p1)       [right=of p0] {$p_1 = 0$};
\node[roundnode]        (s1)       [above=of p1] {$s_1 = \frac{1}{2}$};
\node[squarednode]      (p2)       [right=of p1]                       {$p_2 = \frac{1}{2}$};

\draw[<-] (s1.south) -- (p1.north);
\draw[->] (p1.east) -- (p2.west);

\end{tikzpicture}

\quad

\myindent Finally, we use third move to transfer fully the remaining mass from $s_0$ to $p_1$ lighting only $r_0$ again

\quad

\begin{tikzpicture}[
roundnode/.style={circle, draw=green!60, fill=green!5, very thick, minimum size=20mm},
squarednode/.style={rectangle, draw=red!60, fill=red!5, very thick, minimum size=20mm},
]

\node[squarednode]      (p0)                              {$p_0 = 0$};
\node[roundnode]        (s0)       [above=of p0] {$s_0 = 0$};
\node[squarednode]      (p1)       [right=of p0] {$p_1 = -\frac{1}{\sqrt{2}}$};
\node[roundnode]        (s1)       [above=of p1] {$s_1 = \frac{1}{2}$};
\node[squarednode]      (p2)       [right=of p1]                       {$p_2 = \frac{1}{2}$};

\draw[->] (s0.south) .. controls (p0) .. (p1.west);

\end{tikzpicture}

\quad

\myindent Thus, we find exactly the same situation we started with with indexes incremented by one. This enables us to start a recursive scheme so that as time goes by we repeat these three moves to transfer the mass to higher frequencies. The idea to ensure that the potential $V$ decreases in Sobolev norms as $t\to\infty$ is that, up to lighting $r_k$ for a longer time, we may at each step choose it arbitrarily small.

\subsection{Explicit computation of the potential and of the resonant solution}

\myindent We now make the previous argument rigorous. We first find a smooth function $\phi$ on $\R$, nonnegative and nondecreasing, such that $\phi = 0$ on $(-\infty, 0]$, $\phi = 1$ on $[1, + \infty)$, and we set $\alpha = \int_0^1 \phi$. Take $(\beta_k)_{k\geq0}$ a sequence of positive real numbers such that $\beta_k \ll 1$. The $(\beta_k)$ will control the amplitude to which we light up $r_k$, and we will fix them later in order to control the decay of the potential $V$ in Sobolev norms.

\subsubsection{Initialling the induction}

\myindent We choose at $t = 0$

\begin{equation}
    \begin{pmatrix}
    p_1(0) \\
    p_0(0)\\
    s_0(0) 
    \end{pmatrix}= \begin{pmatrix} \frac{1}{2} \\ -\frac{1}{\sqrt{2}} \\ \frac{1}{2} \end{pmatrix}
\end{equation}

(and the other $p_k,s_k$ are set to zero). We now set

\begin{equation}
    \begin{cases}
    r_0(t) &= \frac{7\pi}{4\sqrt{2}\alpha} \beta_0 \phi(t) \quad 0\leq t \leq 1 \\
    r_0(t) &= \frac{7\pi}{4\sqrt{2}\alpha} \beta_0 \quad 1 \leq t \leq 1 + t_0\\
    r_0(t) &= \frac{7\pi}{4\sqrt{2}\alpha} \beta_0 \phi(t_0 + 2 - t) \quad 1 + t_0 \leq t \leq t_0 + 2
    \end{cases}
\end{equation}

where we set $t_0$ such that $\int_0^{t_0 +2} r_0 = \frac{7\pi}{4\sqrt{2}}$, which means $t_0 = \alpha (\beta_0^{-1} - 2)$. We set $r_k(t) = 0$ on $[0,t_0 + 2 ]$ for all $k\geq 1$. 

\myindent Now, at $t = t_0 + 2$, we find 

\begin{equation}
    \begin{pmatrix}
    p_1(t_0 + 2) \\
    p_0(t_0 + 2)\\
    s_0(t_0 + 2) 
    \end{pmatrix}= \begin{pmatrix} \frac{1}{\sqrt{2}} \\ 0 \\ \frac{1}{\sqrt{2}} \end{pmatrix}
\end{equation}

\myindent Set now

\begin{equation}
    \begin{cases}
    r_1(t) &= \frac{\pi}{2\sqrt{2}\alpha} \beta_1 \phi(t - (t_0 + 2)) \quad t_0 + 2 \leq t \leq t_0 + 3\\
    r_1(t) &= \frac{\pi}{2\sqrt{2}\alpha} \beta_1 \quad t_0 + 3 \leq t \leq t_0 + 3 + t_1 \\
    r_1(t) &= \frac{\pi}{2\sqrt{2}\alpha} \beta_1 \phi(4 + t_0  + t_1 - t) \quad 3 + t_0 +  t_1 \leq t \leq 4 + t_0 + t_1
    \end{cases}
\end{equation}

with $t_1$ such that the integral of $r_1$ on $[2 + t_0, 4 + t_0 + t_1]$ is equal to $\frac{\pi}{2\sqrt{2}}$, which means $t_1 = \alpha(\beta_1^{-1} - 2)$. Set $r_k(t) = 0$ on $[2+ t_0, 4 + t_0+ t_1]$ for all $k\neq 1$. Now, at $t = 4 + t_0 + t_1$, there holds

\begin{equation}
    \begin{pmatrix}
    p_2(4 + t_0 + t_1) \\
    p_1(4 + t_0 + t_1) \\
    s_1(4 + t_0 + t_1) \\
    p_0(4 + t_0 + t_1) \\
    s_0(4 + t_0 + t_1)
    \end{pmatrix} 
    = \begin{pmatrix}
    \frac{1}{2} \\
    0 \\
    \frac{1}{2}\\
    0 \\
    \frac{1}{\sqrt{2}}
    \end{pmatrix}
\end{equation}

and the other $p_k,s_k$ are equal to zero. To finish the cycle we need to transfer all the mass from $s_0$ to $p_1$, and we will end up with $(p_2,p_1,s_1) = (\frac{1}{2}, - \frac{1}{\sqrt{2}}, \frac{1}{2})$ wich was exactly the initial state on $(p_1,p_0,s_0)$. This enables to start a recursive process. More precisely, set

\begin{equation}
    \begin{cases}
    r_0(t) &= \frac{\pi}{\sqrt{2}\alpha} \beta_0 \phi(t - (4 + t_0 + t_1))\quad 4 + t_0 + t_1 \leq t \leq 5 + t_0 + t_1 \\
    r_0(t) &= \frac{\pi}{\sqrt{2}\alpha} \beta_0 \quad 5 + t_0 + t_1 \leq t \leq 5 + t_0 + t_1 + t_0 \\
    r_0(t) &= \frac{\pi}{\sqrt{2}\alpha} \beta_0 \phi(6 + t_0 + t_1 + t_0 - t) \quad 5 + t_0 + t_1 + t_0 \leq t \leq 6 + t_0 + t_1 + t_0
    \end{cases}
\end{equation}

with once again $t_0 = \alpha(\beta_0^{-1} - 2)$, and $r_k(t) = 0$ on $[4 + t_0 + t_1, 6 + t_0 + t_1 + t_0]$ for $k\neq 1$. There holds at $t = 6 + 2t_0 + t_1$

\begin{equation}
    \begin{pmatrix}
    p_2(t) \\
    p_1(t) \\
    s_1(t) \\
    p_0(t) \\
    s_0(t) 
    \end{pmatrix}
    = \begin{pmatrix}
    \frac{1}{2} \\
    -\frac{1}{\sqrt{2}} \\
    \frac{1}{2}\\
    0 \\
    0
    \end{pmatrix}
\end{equation}

as was expected. 

\subsubsection{Recursive scheme}

\myindent Now, set $t_n := \alpha(\beta_n^{-1} - 2)$, and suppose that there holds for $T_n = 6n + 2t_0 + 3t_1 + 3t_2 + ... + 3t_{n-1} + t_n$
\begin{equation}
    \begin{pmatrix}
    p_{n+1} \\
    p_n\\
    s_n
    \end{pmatrix}
    = \begin{pmatrix}
    \frac{1}{2}\\
    -\frac{1}{\sqrt{2}}\\
    \frac{1}{2}
    \end{pmatrix}
\end{equation}

with the other $p_k,s_k$ being equal to zero. We set now

\begin{equation}
    \begin{cases}
    r_n(t) &= \frac{7\pi}{8\sqrt{2}\alpha} \beta_n \phi(t) \quad T_n \leq t \leq 1 + T_n\\
    r_n(t) &= \frac{7\pi}{8\sqrt{2}\alpha} \beta_n \quad 1 + T_n \leq t \leq 1 + T_n + t_n\\
    r_n(t) &= \frac{7\pi}{8\sqrt{2}\alpha} \beta_n \phi(2 + T_n + t_n - t) \quad 1 + T_n + t_n \leq t \leq T_n + 2 + t_n
    \end{cases}
\end{equation}

all the other $r_k$ being set to zero on $[T_n, T_n + 2 + t_n]$. Now there holds at $t = T_n + 2 + t_n$ 

\begin{equation}
    \begin{pmatrix}
    p_{n+1} \\
    p_n\\
    s_n
    \end{pmatrix}
    = \begin{pmatrix}
    \frac{1}{\sqrt{2}}\\
    0\\
    \frac{1}{\sqrt{2}}
    \end{pmatrix}
\end{equation}

\myindent Now set

\begin{equation}
    \begin{cases}
    r_{n+1}(t) &= \frac{\pi}{2\sqrt{2}\alpha} \beta_{n+1} \phi(t - (T_n + 2 + t_n)) \quad T_n + 2 + t_n \leq t \leq T_n + 3 + t_n\\
    r_{n+1}(t) &= \frac{\pi}{2\sqrt{2}\alpha} \beta_{n+1} \quad T_n + 3 + t_n \leq t \leq T_n + 3 + t_n + t_{n+1} \\
    r_{n+1}(t) &= \frac{\pi}{2\sqrt{2}\alpha} \beta_{n+1} \phi(T_n + 4 + t_n + t_{n+1}- t) \quad T_n + 3 + t_n  + t_{n+1} \leq t \leq T_n + 4 + t_n + t_{n+1}
    \end{cases}
\end{equation}

the other $r_k$ being set to zero on $[T_n + 2 + t_n, T_n + 4 + t_n + t_{n+1}]$. There holds at $t = T_n + 4 + t_n + t_{n+1}$ 

\begin{equation}
    \begin{pmatrix}
    p_{n+2}(T_n + 4 + t_n + t_{n+1}) \\
    p_{n+1}(T_n + 4 + t_n + t_{n+1}) \\
    s_{n+1}(T_n + 4 + t_n + t_{n+1}) \\
    p_n(T_n + 4 + t_n + t_{n+1}) \\
    s_n(T_n + 4 + t_n + t_{n+1})
    \end{pmatrix} 
    = \begin{pmatrix}
    \frac{1}{2} \\
    0 \\
    \frac{1}{2} \\
    0 \\
    \frac{1}{\sqrt{2}}
    \end{pmatrix}
\end{equation}

\myindent Set finally

\begin{equation}
    \begin{cases}
    r_n(t) &= \frac{\pi}{\sqrt{2}\alpha} \beta_n \phi(t - (T_n + 4 + t_n + t_{n+1}))\quad T_n + 4 + t_n + t_{n+1} \leq t \leq T_n + 5 + t_n + t_{n+1} \\
    r_n(t) &= \frac{\pi}{\sqrt{2}\alpha} \beta_n \quad T_n + 5 + t_n + t_{n+1}\leq t \leq T_n + 5 + t_n + t_{n+1} + t_n \\
    r_n(t) &= \frac{\pi}{\sqrt{2}\alpha} \beta_n \phi(T_n + 6 + t_n + t_{n+1} + t_n - t) \quad T_n + 5 + t_n + t_{n+1} + t_n \leq t \leq T_n + 6 + t_n + t_{n+1} + t_n
    \end{cases}
\end{equation}

\myindent There now holds at $T_{n+1} = T_n + 6 + t_n + t_{n+1} + t_n$

\begin{equation}
    \begin{pmatrix}
    p_{n+2}(t) \\
    p_{n+1}(t) \\
    s_{n+1}(t) \\
    p_n(t) \\
    s_n(t) 
    \end{pmatrix}
    = \begin{pmatrix}
    \frac{1}{2} \\
    -\frac{1}{\sqrt{2}} \\
    \frac{1}{2}\\
    0 \\
    0
    \end{pmatrix}
\end{equation}

\myindent We may now induce this construction for all $n\geq 1$, which yields a solution $(p_k(t),s_k(t))$ to \eqref{eqchill}, thus leading to a solution $(a_n(t))$ of $(\mathcal{RFS})$ which we control very explicitly.

\begin{remark}\label{behavioura}
    Provided the $\beta_k$'s are small enough, the explicit construction yields firstly that $|a_n(t)| \leq 1$ for all $n,t$, and secondly the following behaviour for $(a_n(t))$ : for each n, observe that $a_n(t) = 0$ outside of a finite interval. Moreover, this interval can be divided into a bounded number of subintervals so that either those subintervals are of length $2$ (corresponding to the time we take in order to light up a $r_k$ or light it out), either $a_n(t)$ is a finite linear combination of oscillating factors $e^{ift}$, where the frequency $f$ is of the order of $\beta_k$ for some $k$, hence arbitrarily small. 
\end{remark}

\subsection{Explicit choice for $\beta_k$ in order for $V$ to decay}

\myindent In order to prove theorem $\eqref{result}$, we need to ensure that $V$ and all its derivative decay with respect to all Sobolev norms as $t\to\infty$. Now, from the construction we see that for all $t\geq 0$ there is a unique $k(t)$ such that $v_{n} = 0$ for all $n\neq \pm l_{k(t)}$. Now, find a $m\in \N$ and a $s \geq 0$, there holds

\begin{equation}
    \|\partial^m_t V(t,\cdot)\|_{H^s} \simeq \beta_{k(t)}\left|l_{k(t)}\right|^{s + 2m}
\end{equation}

\myindent As $k(t) \to + \infty$ when $t\to + \infty$, and thus as $|l_{k(t)}| \to + \infty$, we need to ensure that $\beta_k$ decays faster with respect to $k$ that any power of $l_k$. A natural choice is

\begin{equation}
    \beta_k := |l_k|^{-|l_k|}
\end{equation}

and we will see that this choice indeed enables us to close the estimates.

\section{Approximation}

\subsection{Resonant solution + perturbation decomposition}

\myindent In order to construct a solution to the full system $(\mathcal{FS})$ we try and approximate it by the solution $(a_n(t))$ built in the previous section. In that spirit, we set the solution $(b_n(t))$ with the a priori form $b_n(t) = a_n(t) + c_n(t)$  where $a_n(t)$ is the solution to $(\mathcal{RFS})$ built above and $c_n$ is a perturbation. We may thus write

\begin{equation}
    \partial_t (a_n + c_n) = \sum_{m \in \Z^2} (a_m + c_m)(t) v_{n-m}(t) \left(e^{-i\omega_{m,n}^+t} - e^{-i\omega_{m,n}^-t}\right)
\end{equation}

and we already know that

\begin{equation}
    \partial_t a_n = \sum_{m\in \Gamma_{res}^+(n)}a_m(t)v_{n-m}(t) \ - \sum_{m\in \Gamma_{res}^-(n)}a_m(t)v_{n-m}(t)
\end{equation}

\myindent Thus we need $(c_n)$ to solve 

\begin{equation}\label{eqpert}
    \partial_t c_n = \sum_{m \in \Z^2} c_m(t) v_{n-m}(t) \left(e^{-i\omega_{m,n}^+t} - e^{-i\omega_{m,n}^-t}\right)
    + \sum_{m\notin \Gamma_{res}^+(n)}a_m(t)v_{n-m}(t)e^{-i\omega_{m,n}^+t} \ - \sum_{m\notin \Gamma_{res}^-(n)}a_m(t)v_{n-m}(t)e^{-i\omega_{m,n}^-t}
\end{equation}

\myindent Our goal is now to build a solution $(c_n)$ to \eqref{eqpert} which decays as $t\to \infty$. We will use a Cauchy sequence method: the equation \eqref{eqpert} is globally well-posed in $l^1(\Z)$ so we may set for a given integer $N>0$ $(c_n^N)$ the solution on $\R_+$ with initial condition $c^N(T_N) = 0$. There holds

\begin{align*}
    c_n^N(t) &= -\sum_{m\in \Z^2} \int_t^{T_N} c_m^N(s) v_{n-m}(s) \left(e^{-i\omega_{m,n}^+s} - e^{-i\omega_{m,n}^-s}\right) ds \\
    &- \sum_{m\notin \Gamma_{res}^+(n)} \int_t^{T_N} a_m(s)v_{n-m}(s)e^{-i\omega_{m,n}^+s}ds \ + \sum_{m\notin \Gamma_{res}^-(n)}\int_t^{T_N}a_m(s)v_{n-m}(s)e^{-i\omega_{m,n}^-s} ds
\end{align*}

from which we infer, for $t\leq T_N$

\begin{align*}
    \|(c_n^N(t))\|_{l^1} &\leq 2\int_t^{T_N} \|(c_n^N(s))\|_{l^1} \|(v_n(s))\|_{l^1} ds \\
    &+ \sum_{n} \sum_{m\notin \Gamma_{res}^+(n)} \left|\int_t^{T_N} a_m(s)v_{n-m}(s)e^{i\omega_{m,n}^+s}ds\right| \ + \sum_{n} \sum_{m\notin \Gamma_{res}^-(n)} \left|\int_t^{T_N} a_m(s)v_{n-m}(s)e^{i\omega_{m,n}^-s}ds\right|
\end{align*}

which we rewrite as the inequality for $t\leq T_N$

\begin{equation}
    \|(c_n^N(t))\|_{l^1} \leq \alpha(t) + \int_t^{T_N} \|(c_n^N(s))\|_{l^1} \beta(s) ds
\end{equation}

\myindent By Gronwall's lemma

\begin{equation}
    \forall t \leq T_N \quad \|(c_n^N(t))\|_{l^1} \leq \alpha(t) + \int_t^{T_N} \alpha(s) \beta(s) \exp\left(\int_t^s \beta(\sigma) d\sigma\right) ds
\end{equation}

\subsection{Estimates on $\alpha(t)$}

\myindent First, let us study $\alpha(t)$. The set of pairs $(m,n-m)$, $n \in Z^2$ and $m\notin \Gamma_{res}^+(n)$ (resp $m\notin \Gamma_{res}^-(n)$) is equal to the set of pairs $(n_1,n_2)\in \Z^2$ such that $n_1$ et $n_2$ aren't orthogonal (resp $n_2$ and $n_1 + n_2$ aren't orthogonal). Moreover there stands $v_n(s) = 0$ for all $n \neq \pm l_k$ for a given $k\geq 0$, and we recall that $v_{-n} = v_n$. Finally we know that $a_n(s) v_{l_k}(s) = 0$ as soon as $n \notin \{m_k,m_k - l_k, m_{k+1}, m_{k+1} - l_{k+1}, m_{k+2}\} =: E_k$. We may then write 

\begin{equation}
    \alpha(t) = \sum_{k\geq 0} \sum_{n \in E_k} I(k,n,t)
\end{equation}

where $I(k,n)$ is a sum of at most four quantities of the form 

\begin{equation}
    J(k,n,\omega,t) := \left|\int_t^{T_n} a_n(s)r_k(s) e^{i\omega s}ds\right|
\end{equation}

and $\omega$ is a frequency belonging to $\Z\backslash \{0\}$, thus ensuring $|\omega| \geq 1$. (This is here that we use the non resonance of the interactions).

\myindent We may now write 

\begin{align*}
    \int_t^{T_n} a_n(s)r_k(s) e^{i\omega s}ds &= \left[\left(\int_s^ta_n(\sigma)e^{i\omega \sigma}d\sigma \right) r_k(s)\right]_t^{T_N} - \int_t^{T_N}\left(\int_s^ta_n(\sigma)e^{i\omega \sigma}d\sigma \right) r_k'(s) ds
\end{align*}

\myindent The bracket term is equal to zero as $r_k$ is zero at $T_N$ for all $k$. Moreover we may infer from the construction of $r_k$ that 

\begin{equation}
    \int_{\R_+} |r_k'(s)|ds \leq C\beta_k
\end{equation}

with $C$ a universal constant independent of $k$ (indeed, we use that $r_k$ is a constant except maybe on a finite number of interval of length $2$ where its derivative is bounded by $c\beta_k\|\phi'\|_{\infty}$).

\myindent Finally there holds 

\begin{equation}
\left|\int_s^ta_n(\sigma)e^{i\omega \sigma}d\sigma \right| \leq C
\end{equation}

with $C$ a universal constant independent of $s,n,t,\omega$. Indeed, for any $n$, using Remark \eqref{behavioura}, we know that, on the one hand, $|a_n| \leq 1$ on $\R_+$, and on the other hand that, outside of a fixed finite number of intervals of length $2$ (yielding a bounded contribution to the integral), $a_n$ is either equal to zero, or equal to a finite linear combination with bounded number of terms of oscillating exponentials $e^{ift}$, with frequency $f = C'\beta_l$ with $C'$ a universal constant and $l\geq 0$. Thus, up to choosing $|m_0|$ larger, we can impose that there always holds $|f| < 1/2$. Hence, we are left with integrating oscillating exponentials $e^{i(f + \omega)\sigma}$ where $|f + \omega| \geq 1/2$ (since $|\omega| > 1$). A simple integration is enough to conclude the proof of the claim.

\myindent This yields the bound 

\begin{equation}
    J(k,n,\omega,t) \leq C\beta_k
\end{equation}

where $C$ is a universal constant.

\myindent Moreover we see that $r_k(s) = 0$ for all $s \geq T_{k+1}$, thus there holds 

\begin{equation}
    J(k,n,\omega,t) = 0 \quad \forall t \geq T_{k+1}
\end{equation}

\myindent From this we may infer the bound

\begin{equation}
    \alpha(t) \leq C\sum_{k \geq k(t)} \beta_k
\end{equation}

where we set $k(t)$ the smallest non-negative integer such that $t\leq T_{k+1}$. Using moreover the fast decay of $\beta_k$ we may further bound, up to taking a larger $C$ 

\begin{equation}
    \alpha(t) \leq C\beta_{k(t)}.
\end{equation}

\subsection{Estimates on $\beta(t)$}

\myindent As for $\beta(t)$, we see that for all $t$ there is a unique $l(t)$ such that $r_k(t) = 0$ as soon as $k\neq l(t)$, thus we find that

\begin{equation}
    \beta(t) = 4 r_{l(t)}(t).
\end{equation}

\myindent This yields the bound 

\begin{align*}
    \int_t^s \beta(\sigma) d\sigma &\leq 4\int_0^s r_{l(\sigma)}(\sigma) d\sigma \\
    &\leq C (k(s) +1) ;
\end{align*}

indeed, we see that the integral of $r_k$ over $\R_+$ is a constant independent of $k$.

\subsection{Conclusion of the estimates on $c^N$}

\myindent We may thus bound for $t\leq T_N$

\begin{equation}
    \|(c_n^N(t))\|_{l^1} \leq C\left(\beta_{k(t)} + \int_t^{T_N}\beta_{k(s)} \beta_{l(s)} \exp(C(k(s) + 1)) ds\right) 
\end{equation}

\myindent Now, from the construction of $(r_k(s))$ the holds $l(s) \geq k(s)$ thus $\beta_{l(s)} \leq \beta_{k(s)}$. Therefore there holds (for $t\leq T_N$)

\begin{equation}
    \|(c_n^N(t))\|_{l^1} \leq C\left(\beta_{k(t)} + \int_t^{T_N}\beta_{k(s)}^2 \exp(C(k(s) + 1)) ds\right) 
\end{equation}

\myindent Now, $k(s)$ is equal to $k$ on an intervall with measure $l_k$ such that $l_k\beta_k$ is equal to a constant, yielding the bound

    \begin{equation}
    \|(c_n^N(t))\|_{l^1} \leq C\left(\beta_{k(t)} + \sum_{k\geq k(t)}\beta_k e^{Ck} \right) 
\end{equation}

\myindent As $\beta_k$ is decaying faster than a double exponential there stands finally 
    \begin{equation}
    \forall t \leq T_N \quad \|(c_n^N(t))\|_{l^1} \leq C\beta_{k(t)} e^{Ck(t)}
\end{equation}

\section{Cauchy sequence and conclusion}

\subsection{Cauchy sequence}

\myindent We now prove that $(c^N)$ is a Cauchy sequence in $l^1(\Z)$. Set $M > N$ ; we look at the equation satisfied by $c^M - c^N$  

\begin{align*}
    (c_n^M - c_n^N)(t) &= -\sum_{m\in \Z^2} \int_t^{T_N} (c_m^M - c_m^N)(s) v_{n-m}(s) \left(e^{i\omega_{m,n}^+s} - e^{i\omega_{m,n}^-s}\right) ds + c_n^M(T_N) \\
    &- \sum_{m\notin \Gamma_{res}^+(n)} \int_{T_N}^{T_M} a_m(s)v_{n-m}(s)e^{-i\omega_{m,n}^+s}ds \ + \sum_{m\notin \Gamma_{res}^-(n)}\int_{T_N}^{T_M}a_m(s)v_{n-m}(s)e^{-i\omega_{m,n}^-s} ds
\end{align*}

\myindent Thus

\begin{align*}
    \|((c_n^M - c_n^N)(t))\|_{l^1} &\leq 2\int_t^{T_N} \|((c_n^M - c_n^N)(s))\|_{l^1} \|(v_n(s))\|_{l^1} ds + \|(c_n^M(T_N))\|_{l^1} \\
    &+ \sum_{n} \sum_{m\notin \Gamma_{res}^+(n)} \left|\int_{T_N}^{T_M} a_m(s)v_{n-m}(s)e^{-i\omega_{m,n}^+s}ds\right| \ + \sum_{n} \sum_{m\notin \Gamma_{res}^-(n)} \left|\int_{T_N}^{T_M} a_m(s)v_{n-m}(s)e^{-i\omega_{m,n}^-s}ds\right| \\
    &\leq 2\int_t^{T_N} \|((c_n^M - c_n^N)(s))\|_{l^1} \|(v_n(s))\|_{l^1} ds + C\beta_{k(T_N)} e^{Ck(T_N)} + C\beta_{k(T_N)} \\
    &\leq 2\int_t^{T_N} \|((c_n^M - c_n^N)(s))\|_{l^1} \|(v_n(s))\|_{l^1} ds + C\beta_{N - 1} e^{C(N-1)}
\end{align*}

\myindent Using backward Gronwall's lemma

\begin{equation}
    \|((c_n^M - c_n^N)(t))\|_{l^1} \leq C\beta_{N- 1}e^{C(N-1)}\left(1 + \int_t^{T_N} \beta(s) \exp\left(\int_t^s \beta(\sigma) d\sigma\right) ds\right)
\end{equation}

where $\beta(s) = 2\|(v_n(s))\|_{l^1}$. We know that $\beta(s) = 4 r_{l(s)}(s)$ and thus $\int_s^t \beta(\sigma) d\sigma \leq C(k(s) + 1)$. There holds

\begin{equation}
    \|((c_n^M - c_n^N)(t))\|_{l^1} \leq C\beta_{N- 1}e^{C(N-1)}\left(1 + \int_t^{T_N} \beta_{k(s)} \exp(Ck(s))ds\right)
\end{equation}

\myindent This upper bounds decays to zero as $N,M\to \infty$ if we fix $t$. This shows that $(c^N(t))$ is a Cauchy sequence in $l^1(\Z^2)$ and it thus converges to a $c(t)$ such that, using integral form of the differential equation, $b = a + c$ is a solution to $(\mathcal{RFS})$. There holds moreover

\begin{equation}
    \|(c_n(t))\|_{l^1} \leq C\beta_{k(t)} e^{Ck(t)}
\end{equation}

and this upper bound decays to zero as $t\to +\infty$ as expected.

\subsection{Growth of the Sobolev norm : qualitative result}

\myindent In order to conclude, we recall that $\|(a_n(t))\|_{l^2}$ is preserved and that for all $t\geq 0$ there are at most 5 of the $a_n$ that are nonzero. Therefore, we have on the one hand that for all $t \geq T_n$, $a_k = 0$ for $|k|< |m_n|$ and on the other hand that there exists $|k| \geq |m_n|$ such that $|a_k(t)| \geq \eps$ where $\eps > 0$ is a universal constant. Now, if we set $N$ large enough, we can ensure that $\|(c_n(t))\|_{l^1} \leq \eps /2$ for $t\geq T_N$. Therefore, for all $t \geq T_N$ with $N$ large enough, there exists $|k| \geq |m_N|$ such that $b_k = a_k + c_k$ satisfies $|b_k| \geq \eps / 2$. Now, this ensures that 

\begin{equation}
    \forall t \geq T_N \quad \|(b_n(t))\|_{H^s} \geq |k|^s |b_k| \geq \eps /2|m_N|^s 
\end{equation}

\myindent This already yields a qualitative result for theorem \eqref{result} as we already proved in section 3.4 that the potential $V$ along with all its time derivatives are decaying in all Sobolev norms when $t\to+\infty$

\subsection{Quantitative estimates on the growth rate}

\myindent We now investigate the quantitative bounds that we can hope to get on the rate of growth.

\myindent We first see that $T_n \leq C\beta_n^{-1}$ using the fast decay of $\beta_n$. Moreover as $|l_n| \leq C^n n!$  we find that

\begin{equation}
    T_n \leq \exp\left(C^n n! \log \left(C^n n!\right)\right)
\end{equation}

\myindent This yields the lower bound

\begin{equation}
    \|(b_n(t))\|_{H^s} \geq \delta |m_{n(t)}|^s
\end{equation}

where $\delta > 0$ is a constant, and $n(t)$  the largest integer $n$ such that $\exp\left(C^n n! \log \left(C^n n!\right)\right) \leq t$. Now, we know moreover that $|m_n| \geq c (n-1)!$, thus leading to the lower bound 

\begin{equation}
    \|u(t)\|_{H^s} \geq \eps c^s ((n(t) - 1)!)^s.
\end{equation}

\myindent In order to obtain better bounds, take $\eta > 0$. We first use Stirling's formula 

\begin{equation}
    n! \sim \left(\frac{n}{e}\right)^n \sqrt{2\pi n}
\end{equation}

which ensures that provided $n$ is large enough

\begin{equation}
    C^n n! \log \left(C^n n!\right) \leq ((1+\eta)n)^{(1 + \eta)n}
\end{equation}

\myindent Now set $f(x) := x^x$. We find that provided 

\begin{equation}
    f((1 + \eta)n) \leq \log(t)
\end{equation}

then, provided $n$ is large enough, there stands that $n\leq n(t)$. Now, provided $n$ is large enough, there also holds 

\begin{equation}
     (n - 1)! \geq ((1-\eta)n)^{(1-\eta)n} = f((1-\eta)n).
\end{equation}

\myindent Thus, setting $E(x)$ the largest integer $k$ such that $k\leq x$ we can find a lower bound of the form 

\begin{align*}
    \|u(t)\|_{H^s} &\geq \left(c f\left(\frac{1 - \eta}{1 + \eta} E(f^{-1}(\log(t)))\right) \right)^s \\
    &\geq c_{s,\eta}\exp\left(s\frac{(1 - \eta)^2}{1 + \eta} f^{-1}(\log(t)) \log\left(\frac{(1 - \eta)^2}{1 + \eta} f^{-1}(\log(t)) \right)\right) \ \text{provided t is large enough}\\
    &\geq c_{s,\eta}\exp\left(s\frac{(1 - \eta)^3}{1 + \eta} f^{-1}(\log(t)) \log\left(f^{-1}(\log(t))\right)\right) \ \text{provided t is large enough} \\
    &\geq c_{s,\eta} \left(\log(t)\right)^{\frac{s(1 - \eta)^3}{1 + \eta}}.
\end{align*}

\myindent As we may choose $\eta$ arbitrarily, we find that given any $\delta,s > 0$ there exists $c_{\delta,s}> 0$ such that for $t > 1$

\begin{equation}
    \|u(t)\|_{H^s} \geq c_{\delta,s} (\log t)^{s(1 - \delta)},
\end{equation}

thus concluding the proof of theorem \eqref{result}

\subsection{Estimates on the decay rate of $V$}

\myindent We now prove similar upper bounds on the decay rate of the potential $V(t)$. Fix $s \geq 0$ and $m\in \N\cup\{0\}$. Thanks to equation \eqref{decay}, we may bound 

\begin{equation}
    \|\partial^m_t V(t,\cdot)\|_{H^s} \leq c |l_{k(t)}|^{M-|l_{k(t)}|}
\end{equation}

where $M = M_{m,s} > 0$ and $k(t)$ is the unique $k\geq 0$ such that $r_{k(t)} \neq 0$. We may furthemore infer from previous subsection that given $\delta > 0$ there exists $c_{\delta} > 0$ such that 

\begin{equation}
    |l_{k(t)}| \geq c_{\delta} (\log(t))^{1 - \delta}
\end{equation}

\myindent Thus 

\begin{equation}
     \|\partial^m_t V(t,\cdot)\|_{H^s} \leq C_{\delta} \exp((M_{m,s} - (\log(t))^{1 - \delta}) (1-\delta)\log(\log(t))) 
\end{equation}

\myindent As this holds for all $\delta > 0$, we may conclude that for all $\delta > 0$ there exists $C_{\delta,m,s}$ such that 

\begin{equation}\label{decayV}
     \|\partial^m_t V(t,\cdot)\|_{H^s} \leq C_{\delta,m,s} \exp( - (\log(t))^{1 - \delta}) \log(\log(t))) 
\end{equation}

\myindent As this yields a quantitative bounds for the decay of $V$, it should be noted that it is subpolynomial in the sense that the upper bound decays slower than $t^{-\eps}$ for all $\eps > 0$. It doesn't seem that we can improve the bound, as on $[T_N, T_{N+1}]$ $\|V(t)\|_{H^1}$ is of order $\beta_{N}$, and $T_{N+1}$ is of order $\beta_{N+1}^{-1}$. As for all $\eps > 0$ asymptotically there holds $\beta_{N+1}^{\eps} << \beta_N$ we thus cannot hope for a better bound. 

\printbibliography

@article{faou2020weakly,
  title={On weakly turbulent solutions to the perturbed linear harmonic oscillator},
  author={Faou, Erwan and Rapha{\"e}l, Pierre},
  journal={arXiv preprint arXiv:2006.08206},
  year={2020}
}

@article{bourgain1999growtha,
  title={Growth of Sobolev norms in linear Schr{\"o}dinger equations with quasi-periodic potential},
  author={Bourgain, Jean},
  journal={Communications in Mathematical Physics},
  volume={204},
  pages={207--247},
  year={1999},
  publisher={Springer}
}

@article{bourgain1999growthb,
  title={On growth of Sobolev norms in linear Schr{\"o}dinger equations with smooth time dependent potential},
  author={Bourgain, Jean},
  journal={Journal d’Analyse Math{\'e}matique},
  volume={77},
  number={1},
  pages={315--348},
  year={1999},
  publisher={Springer}
}

@article{bambusi2022growtha,
  title={Growth of Sobolev norms in quasi integrable quantum systems},
  author={Bambusi, Dario and Langella, Beatrice},
  journal={arXiv preprint arXiv:2202.04505},
  year={2022}
}

@article{erdougan2003energy,
  title={Energy Growth in Schr{\"o}dinger's Equation with Markovian Forcing},
  author={Erdo{\u{g}}an, M Burak and Killip, Rowan and Schlag, Wilhelm},
  journal={Communications in Mathematical Physics},
  volume={240},
  number={1-2},
  pages={1--29},
  year={2003},
  publisher={Springer}
}

@article{wang2008logarithmic,
  title={Logarithmic bounds on Sobolev norms for time dependent linear Schr{\"o}dinger equations},
  author={Wang, W-M},
  journal={Communications in Partial Differential Equations},
  volume={33},
  number={12},
  pages={2164--2179},
  year={2008},
  publisher={Taylor \& Francis}
}

@article{delort2010growth,
  title={Growth of Sobolev norms of solutions of linear Schr{\"o}dinger equations on some compact manifolds},
  author={Delort, Jean-Marc},
  journal={International Mathematics Research Notices},
  volume={2010},
  number={12},
  pages={2305--2328},
  year={2010},
  publisher={OUP}
}

@article{maspero2017time,
  title={On time dependent Schr{\"o}dinger equations: Global well-posedness and growth of Sobolev norms},
  author={Maspero, Alberto and Robert, Didier},
  journal={Journal of Functional analysis},
  volume={273},
  number={2},
  pages={721--781},
  year={2017},
  publisher={Elsevier}
}

@article{eliasson2009reducibility,
  title={On reducibility of Schr{\"o}dinger equations with quasiperiodic in time potentials},
  author={Eliasson, H{\aa}kan L and Kuksin, Sergei B},
  journal={Communications in mathematical physics},
  volume={286},
  pages={125--135},
  year={2009},
  publisher={Springer}
}

@article{colliander2010transfer,
  title={Transfer of energy to high frequencies in the cubic defocusing nonlinear Schr{\"o}dinger equation},
  author={Colliander, James and Keel, Markus and Staffilani, Gigiola and Takaoka, Hideo and Tao, Terence},
  journal={Inventiones mathematicae},
  volume={181},
  number={1},
  pages={39--113},
  year={2010},
  publisher={Springer}
}

@article{guardia2015growth,
    author={Guardia, Marcel and Kaloshin, Vadim},
  title={Growth of Sobolev norms in the cubic defocusing nonlinear Schr{\"o}dinger equation},
  journal={Journal of the European Mathematical Society},
  volume={17},
  number={1},
  pages = {71--149},
  year={2015}
}

@article{bambusi2021growth,
  title={Growth of Sobolev norms for abstract linear Schr{\"o}dinger equations},
  author={Bambusi, Dario and Gr{\'e}bert, Beno{\^\i}t and Maspero, Alberto and Robert, Didier and others},
  journal={J. Eur. Math. Soc.(JEMS)},
  volume={23},
  number={2},
  pages={557--583},
  year={2021}
}

@article{bambusi2022growthb,
  title={Growth of Sobolev norms for unbounded perturbations of the Schr{\"o}dinger equation on flat tori},
  author={Bambusi, Dario and Langella, Beatrice and Montalto, Riccardo},
  journal={Journal of Differential Equations},
  volume={318},
  pages={344--358},
  year={2022},
  publisher={Elsevier}
}

@article{maspero2022growth,
  title={Growth of Sobolev norms in linear Schr{\"o}dinger equations as a dispersive phenomenon},
  author={Maspero, Alberto},
  journal={Advances in Mathematics},
  volume={411},
  pages={108800},
  year={2022},
  publisher={Elsevier}
}

@article{maspero2023generic,
  title={Generic Transporters for the Linear Time-Dependent Quantum Harmonic Oscillator on R},
  author={Maspero, Alberto},
  journal={International Mathematics Research Notices},
  volume={2023},
  number={14},
  pages={12088--12118},
  year={2023},
  publisher={Oxford University Press}
}

@article{chabert2024weakly,
  title={A weakly turbulent solution to the cubic nonlinear harmonic oscillator on R2 perturbed by a real smooth potential decaying to zero at infinity},
  author={Chabert, Ambre},
  journal={Communications in Partial Differential Equations},
  pages={1--32},
  year={2024},
  publisher={Taylor \& Francis}
}

\end{document}